\numberwithin{equation}{section}
\definecolor{darkblue2}{RGB}{0, 0, 200}
\definecolor{darkblue}{RGB}{0, 0, 100}
\definecolor{darkestblue}{RGB}{0, 0, 50}
\colorlet{lightblue}{darkblue!50!}
\definecolor{darkred}{RGB}{100, 0, 0}
\colorlet{lightred}{darkred!50!}
\definecolor{darkgreen}{RGB}{0,100,0}
\newtheorem{definition}{Definition}[section]
\newtheorem{lemma}[definition]{Lemma}
\newtheorem{theorem}[definition]{Theorem}
\newtheorem{proposition}[definition]{Proposition}
\newtheorem{corollary}[definition]{Corollary}
\newtheorem{example}[definition]{Example}
\newtheorem{remark}[definition]{Remark}
\DeclareMathOperator{\Ric}{Ric}
\begin{document}

\title[Spectral Torical Band Inequalities and Black Hole Existence]
{Spectral Torical Band Inequalities and Generalizations of
the Schoen-Yau Black Hole Existence Theorem}

\author[Hirsch]{Sven Hirsch}
\address{Department of Mathematics, Duke University, Durham, NC, 27708, USA}
\email{sven.hirsch@duke.edu}

\author[Kazaras]{Demetre Kazaras}
\address{Department of Mathematics, Duke University, Durham, NC, 27708, USA}
\email{demetre.kazaras@duke.edu}

\author[Khuri]{Marcus Khuri}
\address{Department of Mathematics, Stony Brook University, Stony Brook, NY, 11794, USA}
\email{khuri@math.sunysb.edu}

\author[Zhang]{Yiyue Zhang}
\address{Department of Mathematics, University of California, Irvine, CA, 92697, USA}
\email{yiyuez4@uci.edu}

\thanks{M. Khuri acknowledges the support of NSF Grant DMS-2104229.}

\begin{abstract}
Generalized torical band inequalities give precise upper bounds for the width of compact manifolds with boundary in terms of positive pointwise lower bounds for scalar curvature, assuming certain topological conditions. We extend several incarnations of these results in which pointwise scalar curvature bounds are replaced with spectral scalar curvature bounds. More precisely, we prove upper bounds for the width in terms of the principal eigenvalue of the operator $-\Delta +cR$, where $R$ denotes scalar curvature and $c>0$ is a constant. Three separate strategies are employed to obtain distinct results holding in different dimensions and under varying hypotheses, namely we utilize spacetime harmonic functions, $\mu$-bubbles, and spinorial Callias operators. In dimension 3, where the strongest result is produced, we are also able to treat open and incomplete manifolds, and establish the appropriate rigidity statements. Additionally, a version of such spectral torus band inequalities is given where tori are replaced with cubes. Finally, as a corollary
we generalize classical work of Schoen and Yau, on the existence of black holes due to concentration of matter, to higher dimensions and with alternate measurements of size.
\end{abstract}

\maketitle

\section{Introduction} \label{sec:intro}

In \cite{GromovLawson1}, Gromov-Lawson introduced a homotopy theoretic obstruction to positive scalar curvature on closed spin manifolds, referred to as \textit{enlargeability}. Informally, this notion contends that since the Ricci endomorphism must have at least one positive eigenvalue at each point when scalar curvature is positive, the manifold cannot expand dramatically in all directions simultaneously. The $n$-torus $T^n$, therefore, cannot admit a metric of positive scalar curvature because it may be viewed as expanding in all directions by passing to covers. This heuristic is exemplified in the so called \textit{torus band inequality}. More precisely, if the product $T^{n-1}\times [-1,1]$ admits a Riemannian metric having scalar curvature bounded below by $\lambda>0$, then the manifold's width or rather distance between the two boundary components is bounded above by 
\begin{equation}\label{tbin}
\mathrm{width}\leq 2\pi \sqrt{\frac{n-1}{n\lambda}}. 
\end{equation}
This sharp inequality was first proved by Gromov in \cite{Gromov1} for $n\leq 7$ using minimal hypersurface techniques, and was extended to all dimensions by Cecchini \cite{Cecchini1} and Zeidler \cite{Zeidler1} using spinorial methods involving Callias operators. A variety of related band-width inequalities were established by Cecchini-Zeidler \cite{CecchiniZeidler1} again using spinors, and by R\"{a}de \cite{Rade1} with the $\mu$-bubble approach. Furthermore, in \cite{HKKZ} spacetime harmonic functions are applied to obtain a version of the 3-dimensional torus band inequality
with rigidity statement, and Chai-Wan \cite{CW} have established results of this type in the setting of initial data sets for the Einstein equations.

In the current paper we present spectral versions of torical band inequalities, as well as Gromov's cube inequality \cite[Section 3.8]{Gromov2}, and show how these can be used to obtain generalizations of the Schoen-Yau \cite{SY} black hole existence result. In what follows, all manifolds are assumed to be connected, oriented, Hausdorff,
second-countable, and smooth. Given an $n$-dimensional Riemannian manifold $(M^n,g)$ and a number $c\in\mathbb{R}$, we define the \textit{$c$-spectral constant}
by
\begin{equation}\label{variation}
\Lambda_c=\inf\left\{\int_{M^n}\left(|\nabla u|^2+cRu^2\right)dV \text{ }\Big| \text{  $u\in H_0^1( M^n)$, $\int_{M^n} u^2dV=1$}\right\},
\end{equation}
where $R$ denotes scalar curvature and $H^1_0(M^n)$ is the Sobolev space of $L^2$ functions with square integrable derivatives arising as the completion of $C^{\infty}_0(M^n)$, the space of smooth functions with compact support, in the Sobolev $H^1$-norm. When $M^n$ is a compact manifold with boundary, $\Lambda_c$ is defined as the $c$-spectral constant of the interior $\mathring{M}^n$ which coincides with the principal Dirichlet eigenvalue of the Schr\"{o}dinger operator $-\Delta+cR$, and the condition $\Lambda_c >0$ may be interpreted as a weak notion of positive scalar curvature if $c>0$. This particular type of Schr\"{o}dinger operator appears in various geometric contexts for different values of $c$. The particular choice $c=\frac{1}{2}$ plays a special role in the search for black holes, while other values of $c$ are used for the Yamabe problem, minimal surfaces, and Ricci flow with surgery; we refer to the article by Li-Mantoulidis \cite{LM} for an extended discussion. 

The first spectral band-width result presented below is restricted to dimension 3, but provides the strongest statement and conclusions. In particular, we are able to treat open (possibly incomplete) manifolds and obtain rigidity in the case of equality, for an infinite range of $c$ values. This theorem is obtained using the level set technique involving spacetime harmonic functions. If $E$ is a non-empty collection of ends associated with a manifold $M^n$, and $\Sigma^{n-1}\subset M^n$ is a closed hypersurface, then the distance between $E$ and $\Sigma^{n-1}$ will be labelled by $d(E,\Sigma^{n-1})$ and is defined as the infimum of lengths of paths traveling from points in $\Sigma^{n-1}$ to $E$. For further details concerning the notion of ends and properties of open Riemannian manifolds, we refer to \cite[Appendix C]{HKKZ}.

\begin{theorem}\label{sh}
Let $(M^3, g)$ be an open 3-dimensional Riemannian manifold with a smooth closed hypersurface $\Sigma^2$ separating the ends of $M^3$ into two disjoint nonempty classes $E_-$ and $E_+$. 
Assume that there are no spherical classes in $H_2(M^3;\mathbb{Z})$, and that the scalar curvature of $(M^3,g)$ is bounded from below $\inf_{M^3} R>-\infty$. If $c>\frac16$ and $\Lambda_c(g)>0$ then
\begin{equation}
d(E_-,\Sigma^2)+d(E_+,\Sigma^2)\le \frac{\pi}{\alpha}, \quad\textnormal{\; where \;} \alpha=\sqrt{\frac{\Lambda_c(6-c^{-1})}{2c(8-c^{-1})}}. 
\label{rad 3}
\end{equation}
Moreover, equality is achieved in \eqref{rad 3} if and only if $(M^3,g)$ is isometric to the warped product
\begin{equation}
\left(\left(0,\frac{\pi}{\alpha}\right)\times \Sigma^2,\;  d\rho^2+ [\sin(\alpha \rho)]^\frac{8c-2}{6c-1} g_{0}\right),
\end{equation}
where $(\Sigma^2,g_0)$ is a flat torus.
\end{theorem}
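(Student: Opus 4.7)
The plan is to adapt the level-set / spacetime harmonic function technique used in \cite{HKKZ} for the pointwise torus band inequality to the spectral setting, by twisting the construction with the positive first eigenfunction of $-\Delta+cR$. A convenient preliminary step is reduction to compact domains: combining the exhaustion procedure of \cite[Appendix C]{HKKZ} with Dirichlet-eigenvalue monotonicity, it suffices to prove the estimate on a compact manifold $\Omega\subset M^3$ whose boundary has two components $\partial_\pm\Omega$ pushed deep into $E_\pm$, and then pass to the limit. On each such $\Omega$, the hypotheses $\Lambda_c(g)>0$ and $\inf R>-\infty$ supply a positive smooth first Dirichlet eigenfunction $\varphi$ on $\mathring{\Omega}$ satisfying $-\Delta\varphi+cR\varphi=\lambda\varphi$ with $\lambda=\Lambda_c(\Omega)$, and one has $\lambda\to\Lambda_c(g)$ along the exhaustion.

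Next, I would introduce a drift-modified spacetime harmonic function $u$ on $\Omega$, namely a solution of a quasilinear Dirichlet problem of schematic form
\begin{equation*}
\Delta u+\gamma\,\langle\nabla\log\varphi,\nabla u\rangle+\alpha\tan(\alpha u)\,|\nabla u|=0,
\end{equation*}
with $u|_{\partial_-\Omega}=-\tfrac{\pi}{2\alpha}+\epsilon$ and $u|_{\partial_+\Omega}=\tfrac{\pi}{2\alpha}-\epsilon$, so that $u$ plays the role of the signed distance in the model warped product, up to a controlled amount of slack. Existence and a priori boundary-gradient control follow from the methods developed in \cite{HKKZ} and \cite{CW} for this family of quasilinear problems with blow-up-type nonlinearities. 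The drift coefficient $\gamma=\gamma(c)$ is fixed so that a weighted Bochner identity for $|\nabla u|$, multiplied by an appropriate power $\varphi^{2k}$, decouples into a sum of nonnegative quadratic forms once the eigenvalue equation is used to trade pointwise $R$ against $\Delta\varphi/\varphi$ and $\lambda$.

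The central estimate is then obtained by integrating this weighted Bochner formula on $\Omega$ and combining it with the co-area formula and the contracted Gauss equation on regular level sets $\Sigma_t=\{u=t\}$. Every occurrence of the scalar curvature $R$ is absorbed either (a) into $\lambda$ via the identity $\int(|\nabla\varphi|^2+cR\varphi^2)\,dV=\lambda\int\varphi^2\,dV$, or (b) into the intrinsic Gauss curvature of $\Sigma_t$. The hypothesis that no class in $H_2(M^3;\mathbb{Z})$ is spherical forces each component of $\Sigma_t$ to have genus at least one, so Gauss--Bonnet provides $\int_{\Sigma_t}K_{\Sigma_t}\,dA\le 0$. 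The residual inequality takes the form of a Sturm-type ODE bound $F''+\alpha^2 F\le 0$ for a suitable weighted length function $F(t)$ of $\Sigma_t$, and comparison with the model ODE $F''+\alpha^2 F=0$ forces the interval of positivity of $F$ to have length at most $\pi/\alpha$. Letting $\epsilon\to 0$ and then $\Omega\nearrow M^3$ translates this into the desired estimate $d(E_-,\Sigma^2)+d(E_+,\Sigma^2)\le \pi/\alpha$.

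Rigidity is then obtained by tracing equality through each step. Saturation in the Bochner estimate forces $\nabla^2 u$ to be of warped-product type, saturation in Gauss--Bonnet makes every regular level set a flat torus, and saturation in the Sturm comparison determines $u$ and $\varphi$ as explicit trigonometric functions of the distance $\rho$ to $\{u=0\}$. Plugging these back into the weighted PDE yields the explicit warping function $\sin(\alpha\rho)^{(8c-2)/(6c-1)}$. The principal technical obstacle is algebraic: the drift coefficient $\gamma$, the weight exponent $k$, and the nonlinearity $\alpha\tan(\alpha u)$ must be chosen so that after all cancellations and integrations by parts the remaining quadratic form in $(\nabla^2 u,\nabla\varphi,|\nabla u|)$ is nonnegative with the spectral constant $\Lambda_c$ as the correct coupling. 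The value $c=\tfrac16$ is precisely the threshold at which this quadratic form fails to be positive semi-definite, which is mirrored in the divergence of the warping exponent $(8c-2)/(6c-1)$ as $c\searrow\tfrac16$.
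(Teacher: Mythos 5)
Your proposal takes a genuinely different route from the paper's, and the difference is not merely cosmetic—it hides the key difficulty.

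The paper does not use the first eigenfunction $\varphi$ at all. Instead, after deriving the integral inequality of Lemma \ref{integralformula} for the solution of $\Delta u + 3f|\nabla u| = 0$ (with $f$ a $\tan$-type function of the \emph{signed distance} $r(x)$, not of $u$), it applies a Kato inequality to produce a term $c^{-1}|\nabla|\nabla u|^{1/2}|^2 + R|\nabla u|$. The spectral hypothesis then enters in a single step: the Rayleigh quotient of the test function $|\nabla u|^{1/2}\in H^1_0$ is bounded below by $\Lambda_c$. No eigenfunction is ever constructed, and the delicate analysis is devoted to showing that $|\nabla\tilde u_\varepsilon|^{1/2}$ lands in $H^1_0(\widetilde{M^3_{2\varepsilon}})$ after the limiting procedure. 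This is what buys the full range $c>\tfrac16$: the coefficient $c^{-1}$ is simply how much of the Kato quantity $6|\nabla|\nabla u|^{1/2}|^2$ one can afford to reserve, leaving $6-c^{-1}>0$ for the square completion.

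Your proposal instead substitutes $cR = \lambda + \Delta\varphi/\varphi$ pointwise via the eigenfunction, and tries to absorb the $\Delta\varphi/\varphi$ term with a drift $\gamma\langle\nabla\log\varphi,\nabla u\rangle$ and a weight $\varphi^{2k}$. This is in the spirit of the paper's own Section \ref{sec4} ($\mu$-bubble) and Section \ref{sec5} (warped product) treatments—but note that both of those are explicitly restricted to $c=\tfrac12$, and the paper's remark following Theorem \ref{spectral cube} states that the warped-product trick "cannot deal with $c$-spectral constants for $c\neq\tfrac12$, and it does not address the case of equality." Your claim that tuning $(\gamma,k)$ recovers all $c>\tfrac16$ is exactly the unverified step you flag as the "principal technical obstacle." Without carrying out that computation, there is no reason to believe a weight $\varphi^{2k}$ can reproduce the precise constant $\alpha^2 = \Lambda_c(6-c^{-1})/(2c(8-c^{-1}))$, whose $c$-dependence comes from the square completion in \eqref{hessian u} and is not of a form that an eigenfunction weight naturally produces.

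There are two further concrete gaps. First, your nonlinearity $\alpha\tan(\alpha u)|\nabla u|$ depends on $u$ itself rather than on the distance function $r(x)$. This loses the mechanism by which the paper derives the width bound: in the paper, if $w_-+w_+>\pi/\alpha$ then the linear interpolant $\mathbf{l}(r)$ inside the $\tan$ has slope strictly less than $\alpha$ on a set of positive measure, making $1+\tfrac{9}{4\alpha^2}f^2 - \tfrac{3}{2\alpha^2}|\nabla f|$ strictly positive and yielding a contradiction in the integral inequality. In your set-up the coefficient $\alpha\tan(\alpha u)$ is automatically "saturated" in $u$, so the inequality closes trivially regardless of the width. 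Second, your Sturm comparison on $F(t)$ is over $u$-levels, and $u$ is prescribed to take values in $(-\tfrac{\pi}{2\alpha}+\epsilon, \tfrac{\pi}{2\alpha}-\epsilon)$; the interval of positivity of $F$ therefore has length exactly $\pi/\alpha - 2\epsilon < \pi/\alpha$ by construction, so the ODE comparison produces no contradiction and gives no control on $d(E_-,\Sigma^2)+d(E_+,\Sigma^2)$. To extract a distance bound you would need a uniform gradient comparison between $u$ and arclength, which the method does not supply.
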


It should be noted that the model geometries exhibit different asymptotic behavior at the ends depending on whether $\frac{1}{6}<c<\frac{1}{4}$, $c>\frac{1}{4}$, or $c=\frac{1}{4}$, namely the cross-sectional tori either expand, contract, or remain unchanged respectively, see Figure \ref{equality}. Moreover, if we assume the pointwise bound $R\geq\lambda>0$ and note that $\Lambda_c \geq c\lambda$, then applying Theorem \ref{sh} while letting $c\rightarrow\infty$ recovers the original torus band inequality \eqref{tbin}. 



In order to treat higher dimensional spectral band-width inequalities, we will employ the use of spinorial Callias operators \cite{CecchiniZeidler1}. These techniques, which involve modified Dirac equations, have similarities with Witten's proof of the spacetime version of the positive mass theorem \cite{Witten}.
The statement of the next result requires certain terminology. A compact Riemannian manifold $(M^n,g)$ whose boundary components are separated into two disjoint and non-empty collections $\partial M^n=\partial_- M^n\sqcup \partial_+ M^n$ will be referred to as a \textit{Riemannian band}, and its \textit{width} is defined to be the distance between the two classes of boundary components $d(\partial_- M^n,\partial_+ M^n)$. A Riemannian band is called \textit{overtorical} if there exists a smooth map $F:M^n\to T^{n-1}\times[-1,1]$ of nonzero degree, with 
$F(\partial_\pm M^n)\subset T^{n-1}\times \{\pm1\}$. Furthermore, a Riemannian band which is spin is said to be \textit{$\hat{A}$-overtorical} \cite[Section 5]{Zeidler} if there is an integer $k\geq 1$ and a smooth map $F:M^n\to T^{k-1}\times[-1,1]$ such that $F(\partial_\pm M^n)\subset T^{k-1}\times\{\pm1\}$, and the A-hat genus $\hat{A}(F^{-1}(p))\neq 0$ for regular values $p$ of $F$; this latter condition is equivalent to requiring that the $\hat{A}$-degree of $F$ not vanish. Notice that in order for the $\hat{A}$-genus of the fiber to be nonzero, the number $k$ must be less than or equal to $n$ and satisfy $n-k=0$ mod $4$. For instance, the product of a $\mathrm{K3}$ surface with an interval is an $\hat A$-overtorical band, where the map $F$ may be taken to be projection to the interval. If $k=n$, then the $\hat A$-degree agrees with the usual degree of a map between oriented manifolds, and in this situation an $\hat A$-overtorical band is an overtorical band. 

\begin{theorem}\label{T:spinors}
Let $(M^n,\partial_{\pm}M^n, g)$ be an odd dimensional $\hat{A}$-overtorical band with $n\geq 1$. If $c>\frac{n-1}{4n}$ and $\Lambda_c>0$ then
\begin{equation}\label{aoiwfoinah}
d(\partial_- M^n,\partial_+M^n)\le 2\pi\sqrt{\frac{c}{\Lambda_c}\left(\frac{(4c-1)n+2-4c}{(4c-1)n+1}\right)}.
\end{equation}
\end{theorem}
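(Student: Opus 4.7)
I would adapt the spinorial Callias-operator strategy of Cecchini--Zeidler, replacing the pointwise scalar-curvature bound by the spectral quantity $\Lambda_c$ through the device of weighting with the principal eigenfunction. The starting point is to take $u>0$, the first Dirichlet eigenfunction of $-\Delta+cR$ on $\mathring M^n$, which satisfies $-\Delta u+cRu=\Lambda_c u$ with $u|_{\partial M}=0$. Using the $\hat A$-overtorical map $F\colon M^n\to T^{k-1}\times[-1,1]$, I would form the twisted spinor bundle and the associated Callias operator $\mathcal D_\phi=\mathcal D+\phi(t)\chi$, where $t$ is the pullback of the interval coordinate, $\chi$ is an odd self-adjoint involution anticommuting appropriately with Clifford multiplication by $dt$, and $\phi$ is a radial potential to be chosen. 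The Callias index of $\mathcal D_\phi$, equipped with suitable boundary conditions, is nonzero by the $\hat A$-overtorical hypothesis together with the Cecchini--Zeidler Callias-index theorem; hence if this operator were strictly positive on its domain, a contradiction would ensue.

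The core of the argument is a weighted Weitzenb\"ock identity. Applying the Lichnerowicz formula to $\Psi=u^\beta\psi$, substituting $R=c^{-1}(\Lambda_c+u^{-1}\Delta u)$ from the eigenfunction equation, and integrating the $\Delta u$ term by parts produces, after grouping,
\begin{equation*}
\int_M u^{2\beta}|\mathcal D_\phi\psi|^2=\int_M u^{2\beta}\Bigl(|\nabla\psi|^2+\tfrac{\Lambda_c}{4c}|\psi|^2+(\phi^2+\phi')|\psi|^2+A(\beta,c)\tfrac{|\nabla u|^2}{u^2}|\psi|^2\Bigr),
\end{equation*}
up to boundary terms which vanish provided $\beta$ is chosen large enough that $u^{2\beta-1}|_{\partial M}=0$. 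The coefficient $A(\beta,c)$ is quadratic in $\beta$; optimizing over $\beta$ renders $A(\beta,c)\ge 0$ precisely when $c>\tfrac{n-1}{4n}$, which is exactly where this hypothesis enters, and the optimal $\beta$ yields a rescaling of the effective spectral constant by the factor $\tfrac{(4c-1)n+2-4c}{(4c-1)n+1}$.

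With $A(\beta,c)\ge 0$, positivity of $\mathcal D_\phi^2$ reduces to the pointwise condition $\phi^2+\phi'+\alpha^2>0$ along $t\in[-1,1]$, where $\alpha^2=\tfrac{\Lambda_c}{4c}\cdot\tfrac{(4c-1)n+1}{(4c-1)n+2-4c}$. Solutions to $\phi'=-\phi^2-\alpha^2$ are of the form $-\alpha\tan(\alpha(t-t_0))$ and blow up after parameter time $\pi/\alpha$; hence a potential $\phi$ diverging at both $\partial_\pm M^n$ can be constructed whenever the width exceeds $\pi/\alpha=2\pi\sqrt{\tfrac{c}{\Lambda_c}\cdot\tfrac{(4c-1)n+2-4c}{(4c-1)n+1}}$. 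Combined with the $\hat A$-index obstruction, this rules out widths beyond the stated bound \eqref{aoiwfoinah}.

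The main obstacle is the bookkeeping of the weighted Weitzenb\"ock in the middle step: identifying the correct weight $\beta$, verifying that $c>\tfrac{n-1}{4n}$ is precisely the threshold at which $A(\beta,c)\ge 0$ for the optimal $\beta$, and carefully handling the boundary integrands (which carry factors $u^{2\beta-1}$ whose decay at $\partial M$ must be compatible with the spinor boundary conditions). A secondary subtlety is arranging chiral/APS-type boundary conditions for the Callias operator so that both the index computation and the positivity estimate remain simultaneously valid; once those are aligned the ODE step is routine and the constants match.
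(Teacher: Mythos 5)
Your proposal follows the Cecchini--Zeidler Callias framework, correctly identifies the index obstruction coming from the $\hat A$-overtorical hypothesis, and recognizes the ODE role of the potential $\phi$ --- in these respects it is aligned with the paper. However, the central mechanism you propose, weighting the spinor by powers $u^\beta$ of the principal eigenfunction and substituting $cR=\Lambda_c+u^{-1}\Delta u$, is \emph{not} the route the paper takes, and as described it has a genuine gap that prevents it from reaching the stated threshold.

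The paper never introduces the eigenfunction $u$. Instead, it exploits the refined Callias--Weitzenb\"ock identity of \cite[Prop.\ 4.2]{CecchiniZeidler1}, which replaces $|\nabla\varphi|^2+\tfrac{R}{4}|\varphi|^2$ with $\tfrac{n}{n-1}\bigl(|\mathcal P\varphi|^2+\tfrac{R}{4}|\varphi|^2\bigr)$ where $\mathcal P$ is the Penrose operator, and then applies a Kato-type inequality
\begin{equation*}
|\mathcal P\varphi|^2\ \ge\ \frac{1}{4c}\,\bigl|\nabla|\varphi|\bigr|^2\ +\ \beta_1\, f^2|\varphi|^2,
\end{equation*}
valid precisely when $\beta=\tfrac{n}{n-1}-\tfrac{1}{4c}>0$, i.e.\ $c>\tfrac{n-1}{4n}$. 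The crucial payoff is that the spinor estimate is converted into a statement about the scalar function $|\varphi|$: the integrand $\tfrac{1}{4c}\bigl|\nabla|\varphi|\bigr|^2 + \tfrac{R}{4}|\varphi|^2$ is exactly $\tfrac{1}{4c}$ times the Rayleigh quotient integrand for $\Lambda_c$. The paper then runs a limiting argument with potentials $f_j$ blowing up at $\partial M^n$, extracts a nontrivial limit $\pmb\varphi$ with $|\pmb\varphi|\in H^1_0(\mathring M^n)$, and feeds $|\pmb\varphi|$ directly into the variational definition \eqref{variation} of $\Lambda_c$. The eigenfunction is never needed, and the constant $\tfrac{(4c-1)n+2-4c}{(4c-1)n+1}$ comes entirely from tracking $\beta_2 = 1-\tfrac{n\beta_1}{n-1}$ through this bookkeeping.

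Your proposal misses this essential Kato/Penrose step, and that produces a concrete problem. If you apply the \emph{ordinary} Lichnerowicz identity to $\Psi=u^\beta\psi$ and substitute $\tfrac{R}{4}=\tfrac{\Lambda_c}{4c}+\tfrac{\Delta u}{4cu}$, the integration by parts of the $\Delta u$ term generates a cross term $\bigl(\beta-\tfrac{1}{4c}\bigr)\int u^{2\beta-1}\nabla u\cdot\nabla|\psi|^2$. To eliminate it one is forced to take $\beta=\tfrac{1}{4c}$, and the residual $\tfrac{|\nabla u|^2}{u^2}$-coefficient then works out to $\tfrac{4c-1}{16c^2}$, which is nonnegative precisely when $c\ge\tfrac14$. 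In other words, your computation as sketched reproduces the weaker threshold $c>\tfrac14$ rather than $c>\tfrac{n-1}{4n}$, and the ``quadratic in $\beta$'' coefficient $A(\beta,c)$ has no source of $n$-dependence --- that dependence only enters through the $\tfrac{n}{n-1}$ factor of the Penrose refinement, which you do not invoke. Your displayed identity also cannot be literally correct: the left side should read $\int|\mathcal D_\phi(u^\beta\psi)|^2$ rather than $\int u^{2\beta}|\mathcal D_\phi\psi|^2$, and the $\nabla u\cdot\nabla|\psi|^2$ cross terms are absent from the right side. A secondary issue is the handling of the $\mathcal E$-curvature term $\langle\varphi,\mathcal R^{\mathcal E}\varphi\rangle$, which in the paper is controlled by the $\delta$-smallness from Proposition \ref{p:ahatarea}; your sketch does not address it. The eigenfunction-weighting idea may well be salvageable, but it would require reinstating the Friedrich/Penrose decomposition, re-doing the completing-the-square with both the weight $\beta$ and the Kato exponent as parameters, and verifying that the boundary terms (which now carry $u^{2\beta-1}\nabla u\cdot\nu$ factors in addition to the mean curvature and $\phi$) are still compatible with the chiral boundary condition and the index computation --- none of which is routine.
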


Recall that for $n\geq 3$ the conformal Laplacian is given by $-\Delta+c_n R$, where $c_n = \frac{n-2}{4(n-1)}$. Thus, the lower bound for $c$ given by $\frac{n-1}{4n}$ coincides with the conformal Laplacian constant of one dimension higher $c_{n+1}$. The pointwise version of this result was obtained by Zeidler in \cite[Theorem 3.1, Proposition 5.5]{Zeidler}, and states that if $R\geq \lambda>0$ then the $\hat{A}$-overtorical band width satisfies the upper bound of \eqref{tbin}. As with Theorem \ref{sh}, the pointwise analogue may be obtained from the spectral result by observing that $\Lambda_c \geq c\lambda$ and then sending $c\rightarrow\infty$. 

We may remove the spin assumption up to dimension 7 by utilizing (warped) $\mu$-bubbles. These hypersurfaces, introduced by Gromov \cite[Section 5]{Gromov2}, satisfy a type of prescribed mean curvature equation and come with a stability property that can be exploited in a similar manner to the classical Schoen-Yau usage of stable minimal surfaces. Alternatively, from a mathematical general relativity perspective, the $\mu$-bubbles may be viewed as a stable apparent horizon within an auxiliary initial data set for the Einstein equations. In the next theorem, we establish a spectral band width inequality restricted to the case $c=\frac{1}{2}$. The pointwise version of this result, that is under the assumption $R\geq\lambda>0$, again yields the same upper bound as in \eqref{tbin} and is given by Gromov \cite[page 8]{Gromov1} with a proof via torical symmetrization.
Moreover, the pointwise rendition may also be obtained from the work of Rad\"{e} \cite{Rade1} who also exploited $\mu$-bubbles to obtain a variety of band-width estimates, or separately by modifications of the arguments presented below in Section \ref{sec4}.

\begin{theorem}\label{T:mu bubble}
Let $(M^n,\partial_{\pm}M^n,g)$ be an overtorical band with $n\leq 7$. If $\Lambda_{\frac{1}{2}}>0$ then
\begin{equation}\label{aohfoiaopijhpoq}
d(\partial_- M^n,\partial_+ M^n)\le\pi\sqrt{\frac{2n}{(n+1)\Lambda_{\frac{1}{2}}}}.
\end{equation}
\end{theorem}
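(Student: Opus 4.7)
The strategy is to argue by contradiction using a warped $\mu$-bubble whose weight is the first Dirichlet eigenfunction of the Schr\"odinger operator $-\Delta+\tfrac{1}{2}R$. Let $\phi>0$ denote this eigenfunction, so that $-\Delta\phi+\tfrac{1}{2}R\phi=\Lambda_{1/2}\phi$ in the interior of $M^n$ and $\phi$ vanishes on $\partial M^n$. Suppose toward contradiction that the width exceeds $\pi\sqrt{2n/((n+1)\Lambda_{1/2})}$, and fix $L'$ strictly between $\pi\sqrt{2n/((n+1)\Lambda_{1/2})}$ and the actual width. Passing to a slightly thinner overtorical sub-band $M'\subset\mathring{M}^n$ of width $L'$, I would choose a smooth $\rho:M'\to[-L'/2,L'/2]$ with $|\nabla\rho|\le 1$ and $\rho|_{\partial_\pm M'}=\pm L'/2$, together with a smooth monotone $h_0:(-L'/2,L'/2)\to\mathbb{R}$ blowing up to $\pm\infty$ at the endpoints and satisfying the strict Riccati inequality $h_0'<\tfrac{(n+1)h_0^2}{2n}+\Lambda_{1/2}$. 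Such an $h_0$ exists precisely because the equality ODE has blow-up time exactly $\pi\sqrt{2n/((n+1)\Lambda_{1/2})}<L'$, leaving room to slow the growth. Set $h:=h_0\circ\rho$.

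The main analytic object is the weighted functional
\[\mathcal{A}_\phi(\Omega)=\int_{\partial^*\Omega\cap\mathring{M}'}\phi\,d\mathcal{H}^{n-1}-\int_\Omega\phi\,h\,dV,\]
minimized over Caccioppoli sets separating $\partial_\pm M'$. The blow-up of $h$ forces any minimizer $\Omega$ into the interior of $M'$, and the hypothesis $n\le 7$ ensures its reduced boundary $\Sigma^{n-1}$ is a smooth closed hypersurface obeying the weighted Euler-Lagrange equation $H_\Sigma=h-\partial_\nu\log\phi$, where $\nu$ is the outer normal of $\Omega$. Expanding the weighted second-variation identity and combining it with the Gauss equation (to eliminate $\Ric(\nu,\nu)$), the decomposition $\Delta\phi=\Delta_\Sigma\phi+H_\Sigma\,\partial_\nu\phi+\mathrm{Hess}\,\phi(\nu,\nu)$, the eigenvalue identity $\Delta\phi/\phi=\tfrac{R}{2}-\Lambda_{1/2}$, the pinching $|A|^2\ge H_\Sigma^2/(n-1)$, and the elementary completion of squares
\[h\,q+\frac{n(h-q)^2}{2(n-1)}\ \ge\ \frac{(n+1)h^2}{2n},\qquad q:=\partial_\nu\log\phi,\]
(with equality iff $q=h/n$), then testing with $\psi\equiv 1$ and discarding the divergence $\int_\Sigma\Delta_\Sigma\phi=0$, one reaches the key inequality
\[\int_\Sigma\phi\,\frac{R_\Sigma}{2}\,d\mathcal{H}^{n-1}\ \ge\ \int_\Sigma\phi\!\left(\Lambda_{1/2}+\frac{(n+1)h^2}{2n}+\partial_\nu h\right)d\mathcal{H}^{n-1}.\]
Since $|\partial_\nu h|\le|h_0'(\rho)|$ and the strict Riccati inequality yields $\Lambda_{1/2}+\tfrac{(n+1)h_0^2}{2n}-|h_0'|>0$ pointwise on $(-L'/2,L'/2)$, the right-hand side is strictly positive, so $\int_\Sigma\phi\,R_\Sigma>0$.

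The contradiction is extracted from the overtorical topology of $\Sigma^{n-1}$: the map $F:M'\to T^{n-1}\times[-1,1]$ restricts to a map $\Sigma\to T^{n-1}$ of nonzero degree. For $n=3$, a slight refinement of the test function (for instance $\psi=\phi^{-1/2}$) sharpens the above to $\int_{\Sigma^2}R_\Sigma>0$, directly contradicting Gauss-Bonnet since $\chi(\Sigma^2)\le 0$. For $4\le n\le 7$ one proceeds by induction on $n$: with a more careful test function the key inequality upgrades to $\Lambda_{1/2}(\Sigma^{n-1})>0$, and cutting $\Sigma$ along a level set of an $S^1$-factor projection of $F$, followed by passage to arbitrarily long cyclic covers, produces overtorical bands of unbounded width still carrying a positive principal eigenvalue, contradicting the inductive width bound. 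The principal obstacles are the isolation of the sharp coefficient $(n+1)/(2n)$ via completion of squares (which must absorb the unknown geometric quantity $\partial_\nu\log\phi$), the existence and regularity of the warped $\mu$-bubble with prescribed curvature blowing up on the boundary (the source of the restriction $n\le 7$), and, for $n\ge 4$, the careful propagation of the weighted spectral positivity across the dimensional descent.
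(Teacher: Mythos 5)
Your setup, potential construction, $\mu$-bubble minimization, first-variation identity $H_\Sigma = h - \partial_\nu\log\phi$, and the completion of squares isolating the coefficient $\tfrac{n+1}{2n}$ all track the paper's argument closely; the core analytic machinery is the same. The divergence occurs in how the topology of the $\mu$-bubble is leveraged, and your route for $4\le n\le 7$ has a genuine gap.

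For $n=3$, your Gauss--Bonnet finish is a fine variant of the paper's: they show the conformal Laplacian of $\Sigma^2$ has positive principal eigenvalue, hence $\Sigma^2$ admits positive scalar curvature, contradicting the fact that $\Sigma$ maps to $T^2$ with nonzero degree.

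For $4\le n\le 7$, you claim that ``with a more careful test function the key inequality upgrades to $\Lambda_{1/2}(\Sigma^{n-1})>0$'' and then run an inductive descent by cutting cyclic covers. This upgrade is not available. Substituting the test function $\psi u^{-1/2}$ into the stability inequality and regrouping the tangential gradient terms gives
\[
|\nabla_\Sigma\psi|^2 + \psi\langle\nabla_\Sigma\psi,\nabla_\Sigma\log u\rangle - \tfrac34\psi^2|\nabla_\Sigma\log u|^2
= -\tfrac34\bigl|\psi\nabla_\Sigma\log u - \tfrac23\nabla_\Sigma\psi\bigr|^2 + \tfrac43|\nabla_\Sigma\psi|^2,
\]
and the factor $\tfrac43$ is optimal in this manipulation. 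So the best one can extract is positivity of $-\tfrac43\Delta_\Sigma + \tfrac12 R_\Sigma$, equivalently of $-\Delta_\Sigma + \tfrac38 R_\Sigma$. Since $\tfrac38 < \tfrac12$ and $R_\Sigma$ has no sign, this is strictly weaker than $\Lambda_{1/2}(\Sigma)>0$, and your inductive hypothesis (which is about $\Lambda_{1/2}$) is not available on $\Sigma$. The paper avoids this entirely: it observes that positivity of $-\Delta_\Sigma + \tfrac{n-2}{4(n-1)}R_\Sigma$ (a still weaker statement, since $\tfrac{n-2}{4(n-1)}\le\tfrac38$) already implies $\Sigma^{n-1}$ carries a PSC metric (the coefficient dominates the conformal Laplacian constant of the $(n-1)$-manifold $\Sigma$), while R{\"a}de's lemma gives a nonzero-degree map $\Sigma^{n-1}\to T^{n-1}$, so Schoen--Yau's classical obstruction (valid for $\dim\Sigma = n-1\le 7$) yields the contradiction. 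No induction, no cyclic covers, no separate base case are needed. If you want to salvage your descent, you would at least have to generalize the theorem being proved to a $\Lambda_c$ statement for a suitable range of $c$, track how $c$ degrades under descent, and verify the persistence of the eigenvalue bound on cyclic covers and their truncations; the paper's appeal to PSC obstruction theory is cleaner and sidesteps all of this.

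One minor omission: you should check the low-dimensional cases $n=1,2$ explicitly (the paper notes $n=2$ follows from the same computation with $\psi\equiv 1$ and $R_\Sigma\equiv 0$), though this is a small point compared with the issue above.
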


Another type of width inequality has been obtained for cubes by Gromov \cite[Section 3.8]{Gromov2} for dimensions $n\leq 8$, by minimal surface techniques, and this was extended to all higher dimensions by Wang-Xie-Yu \cite[Theorem 1.1]{WXY} (see also \cite{Xie}) with Dirac operator methods. The result states that if a Riemannian metric on the cube $[-1,1]^n$ has scalar curvature bounded below by $R\geq\lambda>0$, then 
\begin{equation}\label{aofoiqnoihnpqoin}
    \sum_{i=1}^n\frac1{\ell_i^2}\geq\frac{n\lambda}{4\pi^2(n-1)}
\end{equation}
where $\ell_i$ is the distance between the $i$th pair of opposite faces of the cube; the constant $\frac{1}{4\pi^2}$ is optimal \cite[Remark 2.1]{WXY}. The inverse square root of the quantity on the left-hand side of \eqref{aofoiqnoihnpqoin}
is referred to as the \textit{cubical-width}. 
Here we establish a spectral version of the cube-width inequality for the case when $c=\frac{1}{2}$.

\begin{theorem}\label{spectral cube}
Let $([-1,1]^n,g)$ be a Riemannian cube. If $\Lambda_{\frac{1}{2}}>0$ then
\begin{equation}
    \sum_{i=1}^n\frac1{\ell_i^2}\geq\frac{(n+1)\Lambda_{\frac{1}{2}}}{2\pi^2n},
\end{equation}
where $\ell_i$ is the distance between the $i$th pair of opposite faces of the cube.
\end{theorem}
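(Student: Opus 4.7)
The plan is to adapt the warped $\mu$-bubble argument behind Theorem \ref{T:mu bubble} to the cube setting, proceeding by induction on the dimension $n$. The base case $n=1$ is a direct computation: on an interval $([-1,1],g)$ of length $\ell_1$, the scalar curvature vanishes identically and $\Lambda_{\frac{1}{2}}=\pi^2/\ell_1^2$, which yields the claimed inequality with equality.

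For the inductive step, I would first mollify the corners of the cube so that the existence and regularity theory for $\mu$-bubbles applies. Let $u>0$ denote the principal Dirichlet eigenfunction of $-\Delta+\frac{1}{2}R$, so that $-\Delta u+\frac{1}{2}Ru=\Lambda_{\frac{1}{2}}u$, and let $\rho$ be the signed distance to a hypersurface separating the opposite $n$-th faces $\partial^n_\pm M$. I would construct a stable warped $\mu$-bubble $\Sigma\subset M$ separating $\partial^n_\pm M$ by minimizing $\int u\,dA-\int u\,h(\rho)\,dV$, with $h=h(\rho)$ a prescribed solution of a Riccati-type ODE designed so that the stability inequality collapses---after invoking the Gauss equation and the eigenfunction identity---to an estimate of the form $\Lambda_{\frac{1}{2}}(\Sigma)\geq \bigl(1-\tfrac{1}{n^2}\bigr)\Lambda_{\frac{1}{2}}-\tfrac{2\pi^2(n-1)}{n\,\ell_n^2}$. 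Because $\Sigma$ inherits the structure of an $(n-1)$-dimensional cube with intrinsic widths $\ell_i'\geq\ell_i$ (paths in $\Sigma$ are also paths in $M$ between the same lateral faces), the inductive hypothesis applied to $\Sigma$ gives $\sum_{i<n}1/\ell_i^2\geq \sum_{i<n}1/(\ell_i')^2\geq \frac{n\,\Lambda_{\frac{1}{2}}(\Sigma)}{2\pi^2(n-1)}\geq \frac{(n+1)\Lambda_{\frac{1}{2}}}{2\pi^2 n}-\frac{1}{\ell_n^2}$, and adding $1/\ell_n^2$ completes the induction.

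I expect the main obstacle to be the free-boundary analysis, since the bubble $\Sigma$ inherits a boundary on the remaining $n-1$ lateral pairs of faces of the ambient cube. The induction therefore requires a free-boundary formulation of Theorem \ref{spectral cube} at every dimension, together with careful control of the $\mu$-bubble's geometry near these lateral faces and a corner-smoothing procedure compatible with the spectral positivity $\Lambda_{\frac{1}{2}}>0$. Equally delicate is the choice of the Riccati ODE for $h$: one must select the particular solution that simultaneously realizes the sharp constants $1-1/n^2$ and $2\pi^2(n-1)/n$, so that the induction closes precisely with the coefficient $(n+1)/(2\pi^2n)$ appearing in the statement.
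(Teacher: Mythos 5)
Your proposal takes a genuinely different route from the paper, but it contains a gap at its central step. The paper's proof is a short warped-product reduction: take the principal Dirichlet eigenfunction $u>0$ on the cube, form the $(n+1)$-dimensional cube $\tilde M^{n+1}_{l,\varepsilon}=[-l,l]\times[-1+\varepsilon,1-\varepsilon]^n$ with metric $\tilde g=u^2dt^2+g$, observe that its scalar curvature is $\tilde R=-2u^{-1}\left(\Delta u-\tfrac12 Ru\right)=2\Lambda_{\frac{1}{2}}$ identically, and apply the Wang--Xie--Yu \emph{pointwise} cube inequality to $\tilde g$; sending $l\to\infty$ removes the $t$-direction term and $\varepsilon\to 0$ recovers the original widths. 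Since the pointwise cube inequality is known in every dimension (via Dirac operator methods), the paper's proof of Theorem \ref{spectral cube} carries no dimension restriction. By contrast, your free-boundary $\mu$-bubble induction would inherit the regularity-theoretic cap $n\le 7$ (or at best $n\le 8$), so even if completed it would prove a strictly weaker statement than the theorem asserts.

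More importantly, the step your argument turns on, the inequality $\Lambda_{\frac{1}{2}}(\Sigma)\ge\left(1-\tfrac{1}{n^2}\right)\Lambda_{\frac{1}{2}}-\tfrac{2\pi^2(n-1)}{n\ell_n^2}$, is asserted, not derived, and you yourself identify what would be required to close it: a free-boundary existence, regularity, and topology theory ensuring $\Sigma$ is genuinely an $(n-1)$-cube with faces matching those of the ambient cube, plus a specific Riccati solution $h(\rho)$ that simultaneously produces the constants $1-1/n^2$ and $2\pi^2(n-1)/n$. Note that in the band version of this argument (Theorem \ref{T:mu bubble}), the second variation only delivers the qualitative conclusion that $\Sigma$ is of positive Yamabe type, used by contradiction; extracting a quantitative spectral lower bound on $\Sigma$ with the precise coefficients your induction needs is a substantially harder computation, and it is unclear that the free-boundary terms cooperate. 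As written the proposal is an incomplete program whose missing pieces are exactly the hard parts, rather than a proof, and it does not match the proof given in the paper.
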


This result implies the spectral torus-band inequality in all dimensions, namely if $(T^{n-1}\times [-1,1],g)$ satisfies $\Lambda_{\frac{1}{2}}>0$ then the width satisfies the upper bound \eqref{aohfoiaopijhpoq}. Indeed, the torus-band naturally gives rise to a Riemannan $n$-cube, and since the spectral constant of the cube is no less than that of the parent torus-band, it is positive. We may then apply Theorem \ref{spectral cube}, and utilize the fact that the torus-band width is less than or equal to the distance between the corresponding pair of opposite faces in the cube, to obtain the desired estimate.
Unlike the other results presented so far, the proof of Theorem \ref{spectral cube} consists of showing how the spectral inequality follows from the pointwise inequality by passing to a warped product constructed with the principal eigenfunction,
in similarity to part of the torical symmetrization process. This method of proof also extends to the spectral inequality for $T^{n-1}\times [-1,1]$, giving an alternative proof to that mentioned above. Although this approach is quite simple, it only applies to the case when $c=\frac{1}{2}$, and is not well-suited for rigidity statements such as in Theorem \ref{sh}.

While the spectral torical-band type inequalities are of independent interest, it is our intention to apply them here to obtain black hole existence results, particularly in higher dimensions. In \cite[Theorem 2]{SY} (see also \cite{Yau}), Schoen-Yau obtained such a result for 3-dimensional initial data sets, which depends on a particular notion of radius. Given a region $\Omega$, consider a simple closed curve $\Gamma\subset\Omega$ which bounds a disc. Let $\mathbf{r}$ denote the supremum of values $r$ with the property that the $r$-distance neighborhood from $\Gamma$ does not intersect $\partial\Omega$, and $\Gamma$ does not bound a disc in this neighborhood. The Schoen-Yau radius $\mathrm{Rad}_{sy}(\Omega)$ is then defined to be the supremum of $\mathbf{r}$ among all curves $\Gamma$ as above. 
An initial data set for the Einstein equations consists of a triple $(M^n,g,k)$, where $(M^n,g)$ is a Riemannian $n$-manifold and $k$ is a symmetric 2-tensor on $M^n$ representing the extrinsic curvature of the embedding into spacetime. By taking traces of the Gauss-Codazzi relations, these quantities satisfy the constraint equations
\begin{equation}\label{qoihr0pqijh}
2\mu=R+\left(\mathrm{Tr}_g k\right)^2 -|k|^2,\quad\quad\quad J=\operatorname{div}_g \left(k-(\mathrm{Tr}_g k)g\right),
\end{equation}
where $\mu$, $J$ represent the matter energy and momentum densities respectively. Suppose that $M^3$ is compact, with boundary satisfying the \textit{untrapped condition} $H>|\mathrm{Tr}_{\partial M^3} k|$ where $H$ denotes the (outward) boundary mean curvature, and $\mu-|J|\geq\Lambda>0$ on a bounded domain $\Omega \subset\subset \mathring{M}^3$. 
The Schoen-Yau black hole existence theorem states that if 
\begin{equation}\label{qoinoinwqh}
\mathrm{Rad}_{sy}(\Omega)\geq \pi\sqrt{\frac{3}{2\Lambda}},
\end{equation}
then $M^3$ contains an apparent horizon $\Sigma^2$. These surfaces, which are alternatively known as marginally outer or inner trapped surfaces, satisfy one of the equations $H_{\Sigma^2}\pm \mathrm{Tr}_{\Sigma^2}k=0$; we refer to \cite{Lee} for further properties of apparent horizons and their physical significance. Thus, for a region of fixed size measured by the radius, sufficient concentration of matter induces gravitational collapse. This yields a manifestation of Thorne's \textit{hoop conjecture} \cite{Thorne}. An advantageous feature of this result, also shared by Theorem \ref{bhexistence} below, is that it applies under quite general conditions. This separates it from most other results on this topic, which require special hypotheses such as symmetry or maximality ($\mathrm{Tr}_g k=0$) of the initial data, for instance \cite{BeigOMurchadha,BizonMalecOMurchadha1,Khuri,Khuri1,Malec1,Wald1}.

The proof in \cite{SY} proceeds in two steps. The first is to establish a spectral-radius inequality, and the second consists of employing this estimate to show that \eqref{qoinoinwqh} forces blow-up in the solution of Jang's equation on $M^3$. Here Jang's equation refers to the quasi-linear elliptic equation of prescribed mean curvature type, used heavily in their proof of the spacetime version of the positive mass theorem \cite{SY2}. We will follow a similar prescription, with alternate notions of radii motivated by the spectral torical-band width inequalities described above.  
A Riemannian band $(N^n ,\partial_{\pm}N^n ,h)$ will be referred to as a \textit{nonPSC-band} if $\partial_- N^n$ and $\partial_+ N^n$ are not separable by a smooth embedded hypersurface $\Sigma^{n-1}\subset N^n$ which admits a metric of positive scalar curvature. As is discussed at the end of Section \ref{sec4}, overtorical bands are examples of nonPSC-bands for $n\leq 8$.
The \textit{torical-radius} $\mathrm{Rad}_t(\Omega)$ is defined to be the supremum of widths of all nonPSC-bands $(N^n ,\partial_{\pm}N^n ,h)$ that are isometrically immersed into $\Omega$, and the \textit{cubical-radius} $\mathrm{Rad}_c(\Omega)$ is defined to be the supremum of cubical-widths of all cubes $([-1,1]^n,h)$ that are isometrically immersed into $\Omega$.

\begin{theorem}\label{bhexistence}
Let $3\leq n\leq7$, and suppose that $(M^n,g,k)$ is a compact $n$-dimensional initial data set with untrapped boundary. Assume that there is a constant $\Lambda>0$ and a compact submanifold $\Omega\subset \mathring{M}^n$ with Lipschitz boundary, such that $\mu-|J|\geq\Lambda$ on $\Omega$. If 
\begin{equation}\label{q3j0qhp0hjpoihjnaoi}
    \mathrm{Rad}(\Omega)\geq \pi \sqrt{\frac{2n}{(n+1)\Lambda}}
\end{equation}
where $\mathrm{Rad}$ is either the torical-radius $\mathrm{Rad}_t$ or the cubical-radius $\mathrm{Rad}_c$, 
then there exists a closed properly embedded smooth apparent horizon $\mathcal{S}^{n-1}$ within $M^n$. Moreover, if $\mu-|J|\geq \lambda>0$ on the apparent horizon then it is of positive Yamabe type with $\mathrm{Rad}(\mathcal{S}^{n-1})\leq \pi\sqrt{\frac{2(n-1)}{n\lambda}}$. In particular,
if the apparent horizon lies within $\Omega$ then its radius satisfies the estimate with $\lambda=\Lambda$. 
\end{theorem}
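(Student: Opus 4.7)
The plan is a two-step Schoen-Yau style argument: reduce black hole existence to blow-up of Jang's equation, and use the spectral band/cube inequalities (Theorems \ref{T:mu bubble} and \ref{spectral cube}) to force this blow-up. I argue by contradiction and assume $M^n$ contains no properly embedded apparent horizon. Since $3\leq n\leq 7$, the results of Schoen-Yau (extended to higher dimensions by Eichmair and Eichmair-Metzger) then guarantee a global smooth solution $f:M^n\to\mathbb{R}$ to Jang's equation, where the untrapped boundary of $M^n$ serves as a barrier that rules out boundary blow-up. The graph $\bar M^n=\mathrm{graph}(f)\subset M^n\times\mathbb{R}$, equipped with the induced metric $\bar g=g+df\otimes df$, is diffeomorphic to $M^n$; because $\bar g\geq g$ as bilinear forms, the projection $\bar M^n\to M^n$ is $1$-Lipschitz and hence distances in $\bar g$ dominate distances in $g$.

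The key analytic input is the well-known identity
\begin{equation*}
\bar R=2(\mu-J(w))+|\mathrm{II}-k|^2+2|q|^2-2\operatorname{div}_{\bar g}(q),
\end{equation*}
where $\mathrm{II}$ is the second fundamental form of the graph, $w$ is the projection of a unit timelike vector (so $|J(w)|\leq|J|$), and $q$ is a certain vector field. For any $u\in C^\infty_0(\bar M^n)$ supported in the projection-preimage of $\Omega$, integration by parts and completing the square yields
\begin{equation*}
\int_{\bar M^n}\!\!\left(|\nabla u|^2+\tfrac12\bar R\,u^2\right)dV_{\bar g}\geq \int_{\bar M^n}(\mu-|J|)u^2\,dV_{\bar g}+\int_{\bar M^n}|\nabla u+uq|^2\,dV_{\bar g}\geq \Lambda\int_{\bar M^n}u^2\,dV_{\bar g}.
\end{equation*}
Consequently $\Lambda_{1/2}(\bar g|_U)\geq\Lambda$ for every open $U\subset\bar M^n$ projecting into $\Omega$.

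Now take a nonPSC band (or cube) $(N,\partial_\pm N,h)$ isometrically immersed in $\Omega$ with width approaching $\mathrm{Rad}(\Omega)$. Pulling $\bar g$ back to $N$ through this immersion produces a Riemannian band whose spectral constant still satisfies $\Lambda_{1/2}\geq\Lambda$ by the estimate above, and whose width is no smaller than the width of $(N,h)$ because $\bar g\geq g$. Applying Theorem \ref{T:mu bubble} in the torical case, or Theorem \ref{spectral cube} together with the reduction from cube-width to band-width described after that theorem, yields
\begin{equation*}
\mathrm{width}(N,\bar g)\leq \pi\sqrt{\tfrac{2n}{(n+1)\Lambda_{1/2}}}\leq \pi\sqrt{\tfrac{2n}{(n+1)\Lambda}}.
\end{equation*}
Taking the supremum over such immersed bands or cubes contradicts the hypothesis \eqref{q3j0qhp0hjpoihjnaoi} (after an arbitrarily small perturbation of $\Lambda$ downward, to convert $\geq$ into a strict comparison). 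Hence a smooth properly embedded apparent horizon $\mathcal{S}^{n-1}\subset M^n$ must exist; standard outermost/minimization constructions guarantee that it may be taken to be stable as a MOTS or MITS.

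For the final clause, on such a stable apparent horizon the MOTS stability inequality (Galloway-Schoen, Andersson-Mars-Simon) gives, for all $\phi\in C^\infty(\mathcal{S}^{n-1})$,
\begin{equation*}
\int_{\mathcal{S}^{n-1}}\!\left(|\nabla \phi|^2+\tfrac12 R_\mathcal{S}\phi^2\right)dA\geq \int_{\mathcal{S}^{n-1}}(\mu+J(\nu))\phi^2\,dA\geq \lambda\int_{\mathcal{S}^{n-1}}\phi^2\,dA,
\end{equation*}
so $\Lambda_{1/2}(\mathcal{S}^{n-1})\geq\lambda>0$. This immediately implies that $\mathcal{S}^{n-1}$ is of positive Yamabe type (conformally change to positive scalar curvature using the principal eigenfunction, as in Galloway-Schoen). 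Applying Theorems \ref{T:mu bubble} and \ref{spectral cube} one dimension lower to any nonPSC band or cube isometrically immersed in $\mathcal{S}^{n-1}$ and taking the supremum gives the bound $\mathrm{Rad}(\mathcal{S}^{n-1})\leq\pi\sqrt{\tfrac{2(n-1)}{n\lambda}}$. I expect the two main technical hurdles to be: first, quoting the correct higher-dimensional Jang equation existence theorem with the appropriate boundary behavior under the untrapped condition; and second, carefully verifying that the pullback of $\bar g$ to $N$ retains its overtorical/cube structure with the required spectral lower bound, so that the band and cube inequalities apply cleanly.
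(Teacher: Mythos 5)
Your outline matches the paper's argument closely (contradiction via Jang's equation, the scalar-curvature identity for the Jang metric $\bar{g}=g+df^2$, spectral estimate on $\Omega$, application of Theorems \ref{T:mu bubble} and \ref{spectral cube}, then $\bar{g}\geq g$ to pass from the Jang radius back to the original radius). However, there is a genuine gap at the contradiction step. Your completed-square estimate only yields the non-strict inequality $\Lambda_{1/2}(\bar{g}|_U)\geq\Lambda$, so the band/cube inequality gives $\mathrm{width}(N,\bar g)\leq\pi\sqrt{\tfrac{2n}{(n+1)\Lambda}}$, and taking the supremum produces $\mathrm{Rad}_g(\Omega)\leq\overline{\mathrm{Rad}}(\Omega)\leq\pi\sqrt{\tfrac{2n}{(n+1)\Lambda}}$. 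Combined with the hypothesis $\mathrm{Rad}(\Omega)\geq\pi\sqrt{\tfrac{2n}{(n+1)\Lambda}}$, this yields \emph{equality}, not a contradiction. Your parenthetical ``perturbation of $\Lambda$ downward'' does not repair this: replacing $\Lambda$ by $\Lambda-\delta$ simultaneously weakens both the hypothesis you could invoke and the conclusion you derive, and moves the target width $\pi\sqrt{\tfrac{2n}{(n+1)(\Lambda-\delta)}}$ \emph{up}, so the comparison is never converted into a strict one.

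The paper closes this by using the principal Dirichlet eigenfunction $u$ of $-\Delta_{\bar g}+\tfrac12\bar R$ on $\Omega$ (rather than arbitrary $C^\infty_0$ test functions) and observing that the quadratic remainder $\tfrac12|A-k|^2_{\bar g}+|X+\nabla\log u|^2_{\bar g}$ cannot vanish identically: if $A\equiv k$ then $X\equiv 0$, forcing $\nabla\log u\equiv 0$, hence $u$ constant, contradicting the Dirichlet boundary condition. This gives the \emph{strict} improvement $\bar\Lambda\geq(1+\varepsilon)\Lambda$ and therefore $\overline{\mathrm{Rad}}(\Omega)<\pi\sqrt{\tfrac{2n}{(n+1)\Lambda}}$, a genuine contradiction with the non-strict hypothesis. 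You should incorporate this step. A secondary remark: the paper does not claim $\Lambda_{1/2}(N,\bar g)\geq\Lambda$ for the immersed band $N$; rather it pulls the eigenfunction $u$ back to $N$ and plugs it directly into the proofs of Theorems \ref{T:mu bubble} and \ref{spectral cube} as the external weight, which avoids spectral comparison issues for immersions that are not embeddings. Your final-clause argument via the Galloway--Schoen MOTS stability inequality is a legitimate alternative to the paper's Jang-surface stability route.
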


In Section \ref{sec6}, a class of initial data will be constructed which satisfy the hypotheses of this theorem.
It should be noted that, analogously to the Schoen-Yau result \cite{SY}, there are no examples to be found among those that are maximal. To see this, note that if the data were maximal, then the constraint equations \eqref{qoihr0pqijh} and the inequality $\mu-|J|\geq\Lambda$ imply that $R\geq 2\Lambda$ on $\Omega$. Thus, any overtorical band isometrically immersed in $\Omega$ must have width no greater than $\pi\sqrt{\frac{2(n-1)}{n\Lambda}}$ by the pointwise overtorical band-width inequality, but this precludes $\mathrm{Rad}(\Omega)$ from achieving \eqref{q3j0qhp0hjpoihjnaoi}. 
See Shi-Tam \cite{ShiTam} (and the realted \cite{ALY}) as well as \cite{KhuriXie} for black hole existence statements in the time symmetric case, when $k=0$. A comparison between the torical-radius and the Schoen-Yau radius will also be given in the last section. In particular, it is shown that $\mathrm{Rad}_t(\Omega)\geq\mathrm{Rad}_{sy}(\Omega)$ for any region $\Omega$, and therefore Theorem \ref{bhexistence} in dimension 3 recovers \cite[Theorem 2]{SY}. We would also like to point out contemporaneous work by Chow and Wan
\cite{ChowWan} that involves similar results.

We close the introduction with an immediate consequence of Theorem \ref{bhexistence}, which has the advantage that
each side of the black hole existence criteria is straightforward and, in principle, relatively easy to compute. 
Moreover, it utilizes cubes which are topologically balls, and thus emulates the essence of Thorne's hoop conjecture
\cite{Thorne} which posits that gravitational collapse occurs when enough mass is compressed inside a perfect sphere.

\begin{corollary}\label{cor main}
Let $3\leq n\leq7$, and suppose that $(M^n,g,k)$ is an asymptotically flat $n$-dimensional initial data set. Assume that there is an $n$-cube within $M^n$ on which
\begin{equation}
    \mu-|J|\geq \frac{2n\pi^2}{n+1} \sum_{i=1}^n\frac1{\ell_i^2},
\end{equation}
where $\ell_i$ is the distance between the $i$th pair of opposite faces of the cube. Then the data
contains a closed properly embedded smooth apparent horizon.
\end{corollary}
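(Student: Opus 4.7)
The plan is to reduce this corollary to a direct application of Theorem \ref{bhexistence} with $\mathrm{Rad} = \mathrm{Rad}_c$, after converting the asymptotically flat setting into the compact-with-untrapped-boundary setting that Theorem \ref{bhexistence} requires. Let $C \subset M^n$ denote the hypothesized $n$-cube of side lengths $\ell_i$ on which the matter-momentum inequality holds, and set $\Lambda := \frac{2n\pi^2}{n+1}\sum_{i=1}^n \ell_i^{-2}$, so that by hypothesis $\mu - |J| \geq \Lambda$ on $\Omega := C$.

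First I would truncate the asymptotically flat manifold to a compact initial data set. By choosing a coordinate sphere $S_r$ of sufficiently large radius $r$ in the asymptotic end, containing $C$ in its interior, the asymptotic decay of $g$ and $k$ forces the mean curvature of $S_r$ to dominate $|\mathrm{Tr}_{S_r} k|$, so $S_r$ is untrapped in the sense required by Theorem \ref{bhexistence}. Let $M^n_r$ be the compact region bounded by $S_r$; then $(M^n_r, g, k)$ is a compact initial data set with untrapped boundary, containing $\Omega = C$ in its interior, and $\mu - |J| \geq \Lambda$ on $\Omega$.

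Next I would verify the radius hypothesis \eqref{q3j0qhp0hjpoihjnaoi} using the inclusion of $C$ as an isometrically embedded (hence immersed) Riemannian cube in $\Omega$. By definition of cubical-radius, $\mathrm{Rad}_c(\Omega) \geq \bigl(\sum_i \ell_i^{-2}\bigr)^{-1/2}$, since the cubical-width of $C$ itself is precisely this quantity. A direct substitution gives
\begin{equation*}
\pi\sqrt{\frac{2n}{(n+1)\Lambda}} = \pi\sqrt{\frac{2n}{(n+1)\cdot \tfrac{2n\pi^2}{n+1}\sum_i \ell_i^{-2}}} = \left(\sum_{i=1}^n \frac{1}{\ell_i^2}\right)^{-1/2},
\end{equation*}
so the inequality \eqref{q3j0qhp0hjpoihjnaoi} is satisfied with $\mathrm{Rad} = \mathrm{Rad}_c$. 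Theorem \ref{bhexistence} then produces the desired closed properly embedded smooth apparent horizon in $M^n_r \subset M^n$.

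The only nontrivial step is the truncation argument, and this is essentially standard in asymptotically flat initial data theory; the remainder of the proof is a bookkeeping check that the tuned constant $\frac{2n\pi^2}{n+1}$ in the hypothesis is precisely what is needed so that $\pi\sqrt{2n/((n+1)\Lambda)}$ simplifies to the cubical-width of $C$. No obstacle beyond these routine verifications is expected, since the substantive geometric content—the passage from spectral cube-width bounds to black hole existence—is already encapsulated in Theorem \ref{bhexistence}.
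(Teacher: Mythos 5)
Your proof is correct and carries out exactly the reduction the paper has in mind: the paper presents Corollary \ref{cor main} as an immediate consequence of Theorem \ref{bhexistence} without writing it out, and the only real content is (a) the truncation of the asymptotically flat end(s) at a large coordinate sphere, whose standard decay conditions make it untrapped, and (b) the algebra confirming that $\pi\sqrt{2n/((n+1)\Lambda)} = \bigl(\sum_i \ell_i^{-2}\bigr)^{-1/2}$ with $\Lambda = \tfrac{2n\pi^2}{n+1}\sum_i \ell_i^{-2}$, both of which you verified. One small observation worth noting is that your argument only produces equality in \eqref{q3j0qhp0hjpoihjnaoi} rather than a strict inequality, but that is fine because the proof of Theorem \ref{bhexistence} derives a strict inequality \eqref{eqh0h09h09yqh} for contradiction, so equality in the hypothesis is enough.
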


This paper is organized as follows. The proofs of the spectral torical band-width inequalities, namely Theorems \ref{sh}, \ref{T:spinors}, and \ref{T:mu bubble}, will be given in Sections \ref{sec2}, \ref{sec3}, and \ref{sec4} respectively.
The spectral cube inequality, Theorem \ref{spectral cube}, will be presented in Section \ref{sec5}. Moreover, as mentioned above, Section \ref{sec6} is dedicated to the black hole existence result Theorem \ref{bhexistence}. Additionally, an appendix is provided that addresses certain existence and regularity issues concerning warped $\mu$-bubbles.

\medskip
\noindent\textbf{Acknowledgements.} The authors would like to thank Hubert Bray, Simon Brendle, and Richard Schoen for insightful discussions, and their interest in this work.

\section{The Spacetime Harmonic Function Approach}
\label{sec2}

In this section we will utilize the technique of spacetime harmonic functions to establish the spectral torical
band-width inequality of Theorem \ref{sh}. Such functions arise as solutions to a semi-linear elliptic equation associated with initial data sets, and were introduced in \cite{HKK} within the context of the spacetime version of
the positive mass theorem. Applications to comparison geometry were recently studied in \cite{HKKZ}.

\subsection{Background}

In this paper we will only use a special case of spacetime harmonic functions in which the associated auxiliary initial data set is umbilic. Thus, the spacetime harmonic equation itself will take as input a single function $f$ defined on a band, which shall be chosen later to extract advantageous coercive behavior of the solution. The following proposition provides the basic existence result for this special class of spacetime harmonic functions, and is an immediate consequence of the more general existence result discussed in \cite[Section 4]{HKK}. 

\begin{proposition}\label{p:existence} Let $(M^n,\partial_\pm M^n,g)$ be an $n$-dimensional Riemannian band, and consider a function $f\in \mathrm{Lip}(M^n)$, as well as constants $c_-<c_+$. Then for any $\varsigma\in(0,1)$, there exists a unique solution $u\in C^{2,\varsigma}(M^n)$ of the spacetime harmonic Dirichlet problem
\begin{equation}\label{e:bandspacetimeharmoniceq1}
\begin{cases}
    \Delta u+nf|\nabla u|=0&\text{ in }M^n ,\\
    u= c_\pm&\text{ on }\partial_\pm M^n .
\end{cases}
\end{equation}
\end{proposition}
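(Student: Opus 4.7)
The plan is to recognize that Proposition \ref{p:existence} is exactly the specialization of the general spacetime harmonic Dirichlet problem of \cite[Section 4]{HKK} to the umbilic auxiliary initial data set $(M^n,g,k)$ with $k = f g$. Indeed, the trace $\operatorname{tr}_g k = nf$ gives precisely the coefficient on the gradient term in \eqref{e:bandspacetimeharmoniceq1}, and no additional drift or inhomogeneous term is present. So the proof amounts to checking that the hypotheses of that general theorem are met in this special case, then outlining the standard regularize--estimate--pass-to-limit scheme.

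The scheme itself proceeds in three steps. First I would regularize the $|\nabla u|$ term by $\sqrt{|\nabla u_\epsilon|^2+\epsilon^2}$ to obtain a smooth quasilinear problem, and apply the Leray--Schauder fixed point theorem, conditional on suitable a priori bounds. Second, I would establish $\epsilon$-independent estimates: the $C^0$-bound $c_-\leq u_\epsilon\leq c_+$ follows from the translation-invariance of the operator in $u$ and the maximum principle (the values $u\equiv c_\pm$ serve as barriers in the obvious comparison), while the $C^1$-bound is obtained by a Bernstein-type computation on $|\nabla u_\epsilon|^2$, supplemented near $\partial M^n$ with barriers built from the signed distance to the boundary. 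Once a uniform $C^1$-bound is in hand, the equation is uniformly elliptic with $L^\infty$ coefficients, and De Giorgi--Nash--Moser gives a uniform $C^{1,\alpha}$-bound. Third, extracting a subsequential limit produces $u\in C^{1,\alpha}$ solving $\Delta u + nf|\nabla u|=0$ with the correct boundary data; Schauder theory applied to the now-Hölder right-hand side then upgrades $u$ to $C^{2,\varsigma}$ up to the boundary.

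For uniqueness, if $u_1,u_2$ are two solutions then $w = u_1 - u_2$ satisfies, at every point where at least one of $\nabla u_1,\nabla u_2$ is nonzero, a linear homogeneous elliptic equation of the form $\Delta w + b\cdot\nabla w = 0$ with bounded drift coming from the mean value identity $|\nabla u_1|-|\nabla u_2| = \frac{\nabla u_1 + \nabla u_2}{|\nabla u_1|+|\nabla u_2|}\cdot\nabla w$ (with the obvious convention at joint critical points). Since $w$ vanishes on $\partial M^n$, the strong maximum principle forces $w \equiv 0$.

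The main obstacle, were one to write this out from scratch instead of citing \cite{HKK}, is the $\epsilon$-independent gradient estimate: the Lipschitz (but not $C^1$) dependence of the source on $\nabla u$ means the Bernstein argument must be carried out on the regularized equation and one must verify that the resulting constants remain bounded as $\epsilon \to 0$. This is precisely the step that \cite[Section 4]{HKK} carries out in generality, and its specialization to $k = f g$ gives \eqref{e:bandspacetimeharmoniceq1}.
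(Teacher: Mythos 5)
Your proposal is correct and matches the paper's approach exactly: the paper gives no independent proof but simply observes that Proposition \ref{p:existence} is an immediate consequence of the general existence result in \cite[Section~4]{HKK}, specialized to the umbilic choice $k=fg$ (so that $\operatorname{tr}_g k = nf$), which is precisely the reduction you identify before sketching the regularize--estimate--pass-to-limit scheme that \cite{HKK} carries out.
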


We note a basic technical fact concerning spacetime harmonic functions, which is shared by solutions to other elliptic equations, namely their set of critical points is small. This becomes useful when expressing certain integral inequalities below, which involve dividing by $|\nabla u|$.

\begin{proposition}\label{critical}
Let $u$ be a nontrivial spacetime harmonic function, with Lipschitz $f$, on a Riemannian manifold $(M^n,g)$, $n\geq 2$.
Then the critical set $\{x\in M^n\mid \nabla u(x)=0\}$ is of Hausdorff codimension at least $2$.
\end{proposition}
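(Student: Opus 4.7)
The plan is to reduce the statement to a standard result on the size of critical sets for solutions of uniformly elliptic equations with bounded coefficients. Since $f\in\mathrm{Lip}(M^n)$ is in particular locally bounded and $u\in C^{2,\varsigma}$, rearranging the spacetime harmonic equation as $\Delta u=-nf|\nabla u|$ gives the pointwise estimate $|\Delta u|\le C|\nabla u|$ on any relatively compact subdomain, with $C=n\|f\|_{L^\infty}$. On the open set $\{|\nabla u|>0\}$ one may equivalently write $\Delta u + V\cdot\nabla u=0$ with $V=nf\,\nabla u/|\nabla u|$ a bounded vector field, $|V|\le n\|f\|_{L^\infty}$. Working in local harmonic coordinates around any interior point, this places $u$ within the class of solutions to a second-order linear uniformly elliptic equation with bounded lower-order coefficients.

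First I would invoke Aronszajn's strong unique continuation principle, which applies in the above setting and rules out $u$ vanishing to infinite order at any interior point. Then the essence of the argument is a blow-up analysis at a critical point $p_0\in M^n$: setting $v:=u-u(p_0)$ and rescaling by the Almgren frequency to produce $v_r(x)=v(\exp_{p_0}(rx))/\rho(r)$ with a suitable normalizing factor $\rho(r)$, one uses the almost-monotonicity of Almgren's frequency function (valid for equations of the form $|\Delta v|\le C|\nabla v|$ modulo corrections vanishing as $r\to 0$) to extract a nontrivial tangent profile $v_r\to P$, where $P$ is a homogeneous harmonic polynomial on $\mathbb{R}^n$ of some integer degree $d$. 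The vanishing of $\nabla u$ at $p_0$ forces $d\ge 2$, and the critical set of a nontrivial homogeneous harmonic polynomial of degree $\ge 2$ is a real algebraic variety of codimension at least two. A standard Federer-type dimension reduction argument then transfers this bound from the tangent profiles to the original set $\{\nabla u=0\}$, giving $\dim_{\mathcal{H}}\{\nabla u=0\}\le n-2$. Alternatively, and more efficiently, one may directly cite the results of Hardt--Simon or of Cheeger--Naber--Valtorta on critical sets of solutions to elliptic equations with bounded (or Lipschitz) coefficients, which deliver the conclusion immediately from the reformulation in the first paragraph.

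The main obstacle I expect is purely technical: the inhomogeneity $nf|\nabla u|$ is only Lipschitz in $u$-derivatives, and the linearized drift $V$ is undefined precisely on the critical set where the bound is needed. One must therefore justify frequency monotonicity as an inequality rather than an equality, with a correction term that can be absorbed at small scales. A cleaner alternative, which I would probably adopt in the write-up, is to observe that the hypothesis $|\Delta u|\le C|\nabla u|$ suffices to apply the classical Hardt--Simon theorem directly, so that the entire proof reduces to verifying this one-line inequality and invoking the reference. This keeps the argument short and avoids re-deriving frequency estimates in the spacetime harmonic setting.
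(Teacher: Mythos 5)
Your ``cleaner alternative'' is essentially the paper's proof. The paper rewrites the spacetime Laplace equation as a linear equation $\Delta u = \langle X,\nabla u\rangle$ with $X = -nf\,\nabla u/|\nabla u|$ on $\{\nabla u\ne 0\}$ and $X=0$ on the critical set, observes that $X$ is merely $L^\infty$, and cites Naber--Valtorta (Comm.\ Pure Appl.\ Math.\ 2017, Theorem 1.1), which gives Minkowski, hence Hausdorff, codimension-two bounds on critical sets for elliptic equations with $L^\infty$ drift. Two small remarks. First, the ``main obstacle'' you flag, that the drift is undefined precisely on $\{\nabla u=0\}$, is not actually an obstacle: one simply extends $X$ by zero there, since the cited theorem only requires an $L^\infty$ bound and not continuity of the drift; your proposed detour through Aronszajn unique continuation, frequency-function almost-monotonicity, blow-up profiles, and Federer dimension reduction is essentially re-deriving the internals of the black-box reference rather than invoking it, so it is unnecessary work. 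Second, the Hardt--Simon theorem is a statement about nodal sets $\{u=0\}$, giving finite $(n-1)$-measure, not about critical sets $\{\nabla u=0\}$; for the codimension-two conclusion on critical sets the appropriate references are Cheeger--Naber--Valtorta or Naber--Valtorta, which you also mention and which is what the paper uses.
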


\begin{proof}
The spacetime Laplace equation may be viewed as a linear equation $\Delta u= \langle X,\nabla u\rangle$, where $X=-nf\frac{\nabla u}{|\nabla u|}$ whenever $\nabla u\ne0$ and $X=0$ when $\nabla u=0$. 
Since $X$ is $L^\infty$, the result follows immediately from \cite[Theorem 1.1]{NaberV}.
\end{proof}

The importance of spacetime harmonic functions rests to a large extent on a fundamental integral inequality that they satisfy, which here will be specialized to dimension 3. The next result follows directly from \cite[Proposition 3.2]{HKK},
by setting $k=fg$ so that $\mu=\frac{1}{2}R+3f^2$ and $J=-2\nabla f$. In this setting, the \textit{spacetime Hessian} is given by
\begin{equation}
{\bar{\nabla}}^2u:=\nabla^2u+|\nabla u|fg.
\end{equation}
Note that the spacetime Laplacian arises as the trace of this spacetime Hessian.

\begin{lemma}\label{integralformula}
Let $(M^3,\partial_\pm M^3,g)$ be a $3$-dimensional Riemannian band, and let $f\in \mathrm{Lip}(M^3)$. If $u\in C^{2,\varsigma}(M^3)$, $\varsigma\in(0,1)$ solves boundary value problem \eqref{e:bandspacetimeharmoniceq1}, then
\begin{align}\label{integralformula2}
\begin{split}
&\int_{\partial_-M^3}2|\nabla u|(2f-H)dA -\int_{\partial_+M^3}2|\nabla u|(2f+H)dA\\
\geq&\int_{M^3}\left(\frac{|\bar{\nabla}^2u|^2}{|\nabla u|}+(R+6f^2)|\nabla u|-4\langle\nabla f,\nabla u\rangle\right)dV -\int_{c_-}^{c_+} 4\pi\chi(\Sigma_t)dt
\end{split}
\end{align}
where 
$H$ is the outward mean curvature of $\partial M^3$, and $\chi(\Sigma_t)$ is the Euler characteristic of regular level sets $\Sigma_t:=u^{-1}(t)$.
\end{lemma}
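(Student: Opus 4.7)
The plan is to derive the claimed integral inequality as the direct specialization of the general spacetime-harmonic integral formula, Proposition 3.2 of \cite{HKK}, to the umbilic initial data configuration in which the second fundamental form is pure trace, $k=fg$. With this choice, both sides of the general formula collapse to the expressions appearing in \eqref{integralformula2} after a handful of algebraic identifications, so essentially no new analytic work is needed.

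First I would check that the PDE in Proposition \ref{p:existence} is exactly the spacetime harmonic equation for this data. Since $k=fg$ gives $\operatorname{tr}_g k = 3f$ on $M^3$, the general spacetime harmonic equation $\Delta u + (\operatorname{tr}_g k)|\nabla u|=0$ collapses to $\Delta u + 3f|\nabla u|=0$, matching \eqref{e:bandspacetimeharmoniceq1} with $n=3$. The spacetime Hessian $\bar\nabla^2 u = \nabla^2 u + |\nabla u|\,k$ likewise reduces to $\nabla^2 u + |\nabla u|\,fg$ as defined in the excerpt.

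Next I would compute the geometric scalars feeding into the HKK formula. Substituting $k=fg$ into the constraint equations \eqref{qoihr0pqijh} gives
\begin{equation*}
2\mu = R + (\operatorname{tr}_g k)^2 - |k|^2 = R + 9f^2 - 3f^2 = R + 6f^2, \qquad J = \operatorname{div}_g\!\bigl(k - (\operatorname{tr}_g k)g\bigr) = -2\,\nabla f,
\end{equation*}
as stated in the remark preceding the lemma. For the boundary terms, the restriction of $k=fg$ to the tangent space of $\partial M^3$ is pure trace with $\operatorname{tr}_{\partial M^3} k = 2f$, while the outward unit normal direction contributes the mean curvature $H$ in the usual way.

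Finally, inserting these identifications into the general form of \cite[Proposition 3.2]{HKK} yields precisely \eqref{integralformula2}: the bulk integrand $(2\mu|\nabla u| + 2\langle J,\nabla u\rangle)$ becomes $(R+6f^2)|\nabla u| - 4\langle\nabla f,\nabla u\rangle$; the boundary integrands $2|\nabla u|(\operatorname{tr}_{\partial_\pm M^3} k \mp H)$ become $2|\nabla u|(2f\mp H)$ with the appropriate signs on the two components; and the topological Euler-characteristic term is unchanged. The substantive content — the spacetime Bochner identity for $\bar\nabla^2 u$, combined with the coarea argument producing the $\int_{c_-}^{c_+}\chi(\Sigma_t)\,dt$ term — is already carried out in \cite{HKK} and does not need to be redone. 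The only mild subtlety is that division by $|\nabla u|$ is involved in the bulk integrand, but the critical set of $u$ has Hausdorff codimension at least two by Proposition \ref{critical}, which is enough to ensure that the integral is well defined and that the coarea computation in \cite{HKK} goes through unchanged in the present setting.
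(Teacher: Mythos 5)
Your proposal is correct and matches the paper's approach exactly: the paper's entire justification for Lemma \ref{integralformula} is the citation of \cite[Proposition 3.2]{HKK} with the umbilic substitution $k=fg$, stating without further derivation that $\mu=\tfrac12 R + 3f^2$ and $J=-2\nabla f$. Your write-up simply spells out the algebra that the paper leaves implicit, including the trace $\operatorname{tr}_{\partial M^3}k = 2f$ for the boundary terms and the appeal to Proposition \ref{critical} to justify dividing by $|\nabla u|$ in the bulk integrand.
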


Even though the function $f$ is only Lipschitz, the appearance of $\nabla f$ in \eqref{integralformula2} is justified
by Rademacher's Theorem, which ensures that the derivative exists almost everywhere. Furthermore, the Euler characteristic integrand is in fact a measurable function, which may be seen as follows. Observe that as explained in \cite[Remark 3.3]{HKK}, the conclusion of Sard's theorem still holds for $u$ even though it may not be $C^3$-smooth. 
Moreover, $u$ is a proper map and so its regular values form an open set of full measure. 
Hence, if $t_0$ is a regular value of $u$, then the function $t\mapsto\chi(\Sigma_t)$ is constant for all levels $t$ near $t_0$. We then have that $\chi(\Sigma_t)$ is continuous almost everywhere, and is therefore measurable. For more information concerning spacetime harmonic functions, we refer to the survey article \cite{BHKKZ}.

\subsection{Proof of Theorem \ref{sh}: the inequality}

Let $\Sigma^2$ be the closed surface that separates $M^3$ into two connected components $M^3_\pm$, where $E_\pm$ is contained in $M^3_\pm$. Set $w_\pm=\min\{d(E_{\pm},\Sigma^2),\frac{\pi}{\alpha}\}$, and suppose that  $w_-+w_+ \geq\frac{\pi}{\alpha}$. Consider the signed distance function $r(x)=\pm d(x,\Sigma^2)$ for $x\in M^3_\pm$. For $\varepsilon>0$ small, define the band $({\widetilde{ M^3_\varepsilon}},\partial_\pm \widetilde{ M_\varepsilon^3},g)$ by
\begin{equation}
   \widetilde{ M_\varepsilon^3}=
    \{x\in M^3\mid r(x)\in[-w_-+\varepsilon,w_+-\varepsilon]\},
\end{equation}
where the assignment $\partial_\pm \widetilde{M^3_\varepsilon}$ respects $E_\pm$.
According to \cite[Lemma C.2]{HKKZ}, $\widetilde{M^3_\varepsilon}$ is compact.
Next, append the compact components of $M^3\setminus \widetilde{M^3_\varepsilon}$ to $\widetilde{M_\varepsilon^3}$, and denote the resulting manifold by $\widehat{M^3_\varepsilon}$. Notice that each component of $M^3\setminus \widehat{M^3_\varepsilon}$ contains at least one end. By appealing to the long exact sequence of the pair $(M^3,\widehat{M_\varepsilon^3})$, and using the fact that the top homology group of an open manifold is trivial, we find that the inclusion $H_2(\widehat{M^3_\varepsilon};\mathbb{Z})\to H_2(M^3;\mathbb{Z})$ is injective. It follows that there are no spherical classes in $H_2(\widehat{M^3_\varepsilon};\mathbb{Z})$, since this property is assumed for $M^3$. Moreover, because $\Sigma^2$ separates the nonempty collections $E_\pm$, we have that at least one component of each $\partial_\pm \widetilde{M^3_\varepsilon}$ remains in $\partial \widehat{M^3_\varepsilon}$, and that the distance within $\widehat{M^3_\varepsilon}$ from $\Sigma^2$ to these components is unchanged. As in the proof of \cite[Main Theorem A]{HKKZ}, there is a small perturbation of $\widehat{M_\varepsilon^3}$ to a band $(M^3_\varepsilon,\partial_\pm M^3_\varepsilon,g)$ with smooth boundary, no spherical homology, and width at least $w_-+w_+ -3\varepsilon$. We will proceed to work with the bands $M^3_{\varepsilon}$, eventually taking a limit as $\varepsilon\rightarrow 0$ to obtain an integral inequality for a nontrivial spacetime harmonic function, which will lead to a contradiction if $w_- +w_+>\frac{\pi}{\alpha}$.

Let $u_{\varepsilon,i}$ be the spacetime harmonic function guaranteed by Proposition \ref{p:existence} satisfying
\begin{equation}\label{e:bandspacetimeharmoniceq}
\begin{cases}
    \Delta u_{\varepsilon,i}+3f_{\varepsilon,i}|\nabla u_{\varepsilon,i}|=0&\text{ in }M^3_\varepsilon ,\\
    u_{\varepsilon,i}=\pm1 &\text{ on }\partial_\pm M^3_\varepsilon ,
\end{cases}
\end{equation}
where $f_{\varepsilon,i}$ is defined in \eqref{f vi} below. 
Denote $w^\pm_\varepsilon=d(\partial_\pm M^3_\varepsilon ,\Sigma^2)$, and observe that $w^\pm_\varepsilon\ge w_\pm-2\varepsilon$ along with $w^+_\varepsilon+w^-_\varepsilon\ge \frac{\pi}{\alpha}-3\varepsilon$.
Let $h(t)$ be a Lipschitz cut-off function such that $h(t)=0$ if $t\le 0$, $h(t)=t$ if $t\in[0,\frac{\pi}{\alpha}]$, and $h(t)=\frac{\pi}{\alpha}$ if $t\ge \frac{\pi}{\alpha}$. For all large positive integers $i$ we define
\begin{equation} \label{f vi}
    f_{\varepsilon,i}(x)=
    \begin{cases}
    (1+\frac{1}{i})\frac{2\alpha}{3}\tan \left(\alpha h(r(x)+w_--2\varepsilon)-\frac{\pi}{2}+\frac{1}{i}\right) &\textnormal{   if } r(x)\le \min\{ -\frac{w^-_\varepsilon}{2},-w_\varepsilon^{-}+\frac{\pi}{6\alpha}\}
    \\
     (1+\frac{1}{i})\frac{2\alpha}{3}\tan \left(\frac{\pi}{2}-\alpha h(w_+-2\varepsilon-r(x))-\frac{1}{i}\right) &\textnormal{   if } r(x)\ge \max\{\frac{w^+_\varepsilon}{2},w^+_\varepsilon-\frac{\pi}{6\alpha}\}
     \\ (1+\frac{1}{i})\frac{2\alpha}{3}\tan \left(\mathbf{l}_{\varepsilon,i}(r(x))\right) &\textnormal{    otherwise}
    \end{cases},
\end{equation}
where $\mathbf{l}_{\varepsilon,i}(r)$ are linear functions chosen to ensure that $f_{\varepsilon,i}$ is Lipschitz. Since $w^+_\varepsilon+w^-_\varepsilon\ge\frac{\pi}{\alpha}-3\varepsilon$, an elementary but tedious calculation shows that the slope of $\mathbf{l}_{\varepsilon,i}$ is positive and less than $\alpha(1+\Tilde{C}\varepsilon)$, where the constant $\tilde{C}>0$ is independent of $\varepsilon$ and $i$. Let $\Omega_\varepsilon$ be the region defined by the third case in \eqref{f vi}. Then outside of a set of measure zero we have
\begin{align}
\label{f varepsilon i}
\begin{split}
    &\frac{4\alpha^2}{9}+f_{\varepsilon,i}^2-\frac{2}{3}|\nabla f_{\varepsilon,i}|\ge0,\quad \textnormal{\; on \;} M^3_\varepsilon\setminus \Omega_\varepsilon,
    \\ &\frac{4\alpha^2}{9}+f_{\varepsilon,i}^2-\frac{2}{3}|\nabla f_{\varepsilon,i}|\ge-C_0(\varepsilon+i^{-1}), \quad\textnormal{\; on \;} \Omega_\varepsilon ,
\end{split}
\end{align}
where $C_0$ is a constant independent of $\varepsilon$ and $i$. Note that $f_{\varepsilon,i}\to \pm\infty$ on $\partial_{\pm}M^3_{\varepsilon}$ as $i\rightarrow\infty$, so that $|f_{\varepsilon,i}|\ge |H_\varepsilon|$ for all $i$ large enough, where $H_\varepsilon$ is the mean curvature of $\partial M^3_\varepsilon$ with respect to the unit outer normal.

We will now apply the integral inequality of Lemma \ref{integralformula}. However, in order to obtain an
optimal estimate for $w_+ +w_-$, an additional divergence term is added 
to produce
\begin{equation}
\begin{split}\label{bdy term}
&\int_{\partial_-M_\varepsilon^3}|\nabla u_{\varepsilon,i}|\left(\frac{3(8-c^{-1})}{6-c^{-1}}f_{\varepsilon,i}-2H_\varepsilon\right)dA -\int_{\partial_+M_\varepsilon^3}|\nabla u_{\varepsilon,i}|\left(\frac{3(8-c^{-1})}{6-c^{-1}}f_{\varepsilon,i}+2H_\varepsilon\right)dA
     \\=&\int_{\partial_-M_\varepsilon^3}|\nabla u_{\varepsilon,i}|\left(4f_{\varepsilon,i}-2H_\varepsilon\right)dA -\int_{\partial_+M_\varepsilon^3}|\nabla u_{\varepsilon,i}|\left(4f_{\varepsilon,i}+2H_\varepsilon\right)dA
     \\ &-\int_{M^3_\varepsilon}\frac{c^{-1}}{6-c^{-1}}\textnormal{div}(f_{\varepsilon,i}\nabla u_{\varepsilon,i})dV
 \\ \ge & \int_{M_\varepsilon^3}\left[ \frac{|\bar{\nabla} ^2u_{\varepsilon,i}|^2}{|\nabla u_{\varepsilon,i}|}+|\nabla u_{\varepsilon,i}|(R+6f_{\varepsilon,i}^2)-4\langle \nabla u_{\varepsilon,i},\nabla f_{\varepsilon,i}\rangle \right] dV-\int^{c_+}_{c_-}4\pi \chi(\Sigma_t)dt
 \\&-\int_{M^3_\varepsilon}\frac{c^{-1}}{6-c^{-1}}\textnormal{div}(f_{\varepsilon,i}\nabla u_{\varepsilon,i})dV
 \\ 
 =&\int_{M_\varepsilon^3}\left[ \frac{|\bar{\nabla} ^2u_{\varepsilon,i}|^2}{|\nabla u_{\varepsilon,i}|}+|\nabla u_{\varepsilon,i}|\left(R+\frac{3(12-c^{-1})}{6-c^{-1}}f_{\varepsilon,i}^2\right)-\frac{3(8-c^{-1})}{6-c^{-1}}\langle \nabla u_{\varepsilon,i},\nabla f_{\varepsilon,i}\rangle \right] dV
 \\& -\int^{c_+}_{c_-}4\pi \chi(\Sigma^{\varepsilon,i}_t)dt,
\end{split}
\end{equation}
where $\{\Sigma^{\varepsilon,i}_t\}$ are the level sets of $u_{\varepsilon,i}$.
Notice that in the above inequality the Euler characteristic term is nonpositive, due to the maximum principle for spacetime harmonic functions and the property that $M^3_\varepsilon$ has no spherical classes.
Moreover, for sufficiently small $\varepsilon$ and large $i$, we may apply \eqref{f varepsilon i} while using the scalar curvature lower bound $R\ge -R_0$, for some constant $R_0 >0$, to find
\begin{align}
\label{omega}
\begin{split}
   & R+\frac{3(12-c^{-1})}{6-c^{-1}}f_{\varepsilon,i}^2-\frac{3(8-c^{-1})}{6-c^{-1}}|\nabla f_{\varepsilon,i}|
    \\ \ge& -R_0+\frac{3c^{-1}}{2(6-c^{-1})}f^2_{\varepsilon,i}-\frac{4\alpha^2}{9}\cdot\frac{9(8-c^{-1})}{2(6-c^{-1})}-1
    \\ \ge & -R_0-c^{-1}\Lambda_c-1.
\end{split}
\end{align}
Next, choose a fixed region $\Omega=\widetilde{M^3_{\varepsilon_0}}$, with $\varepsilon_0$ sufficiently small depending only on $c$, $R_0$, and $\Lambda_c$.  
Then on $M^3\setminus \Omega$ it follows that $r(x)\le-w_-+\varepsilon_0$ or $r(x)\ge w_+-\varepsilon_0$,
which guarantees that $|f_{\varepsilon,i}|$ is large enough to yield
\begin{equation}\label{omega c}
     \frac{3c^{-1}}{2(6-c^{-1})}f^2_{\varepsilon,i}\ge \frac{c^{-1}f^2_{\varepsilon,i}}{6-c^{-1}}+R_0+c^{-1}\Lambda_c+2
     \quad\text{ }\text{ on }\text{ }M^3_\varepsilon\backslash\Omega,
\end{equation}
for sufficiently large $i$. While on $\Omega$, $|f_{\varepsilon,i}|$ is uniformly bounded. Moreover, since
$f_{\varepsilon,i}$ blows-up on $\partial_\pm M^3_\varepsilon$, the boundary integrals of \eqref{bdy term} are nonpositive for large $i$. Hence, utilizing the second and third line of \eqref{omega}, as well as \eqref{omega c} produces
\begin{align}\label{dec part}
    \begin{split}
        0\ge& \int_{M_\varepsilon^3}|\nabla u_{\varepsilon,i}|\left(R+\frac{3(12-c^{-1})}{6-c^{-1}}f_{\varepsilon,i}^2-\frac{3(8-c^{-1})}{6-c^{-1}}|\nabla f_{\varepsilon,i}|\right)dV
        \\ \ge & -\int_{\Omega} (R_0+c^{-1}\Lambda_c+1)|\nabla u_{\varepsilon,i}|dV+\int_{M^3_\varepsilon\setminus\Omega}\left(\frac{c^{-1}}{6-c^{-1}}f^2_{\varepsilon,i}+1\right)|\nabla u_{\varepsilon,i}|dV.
    \end{split}
\end{align}

In order to extract a convergent subsequence, we now rescale $u_{\varepsilon,i}$ similarly to that which is done in \cite[proof of Main Theorem A]{HKKZ}, and define
\begin{equation}
    \Tilde{u}_{\varepsilon,i}(x)=\frac{u_{\varepsilon,i}(x)-\mathcal A_{\varepsilon,i}}{\sup_\Omega |\nabla u_{\varepsilon,i}|}, \quad\quad\quad \mathcal A_{\varepsilon,i}=\frac{1}{|\Omega|}\int_{\Omega}u_{\varepsilon,i}dV.
\end{equation}
The normalized function $\Tilde u_{\varepsilon,i}$ satisfies $\sup_{\Omega}|\nabla\Tilde{u}_{\varepsilon,i}|=1$,
and has vanishing average value on $\Omega$.
Therefore, \eqref{dec part} yields
\begin{equation} \label{gradient integral}
  \int_{M^3_\varepsilon\setminus\Omega}\left(\frac{c^{-1}}{6-c^{-1}}f^2_{\varepsilon,i}+1\right)|\nabla \Tilde{u}_{\varepsilon,i}|dV \le \int_{\Omega} (R_0+c^{-1}\Lambda_c+1)|\nabla \tilde{u}_{\varepsilon,i}|dV\le (R_0+c^{-1}\Lambda_c+1)|\Omega|.
\end{equation}
Since $|f_{\varepsilon,i}|$ is uniformly bounded on $\Omega$ we have
\begin{equation}\label{gradient bound}
    \int_{M^3_\varepsilon} \left(f^2_{\varepsilon,i}+1\right)|\nabla \tilde{u}_{\varepsilon,i}| dV\le \left(\frac{6-c^{-1}}{c^{-1}}+1\right)(R_0+c^{-1}\Lambda_c+1)|\Omega|+|\Omega|\sup_{\Omega} (|f_{\varepsilon,i}|^2+1)\le C_1,
\end{equation}
where $C_1$ is independent of $\varepsilon$ and $i$.
Since the average of $\tilde{u}_{\varepsilon,i}$ vanishes on $\Omega$, we may apply a version of the Poincar\'e inequality 
on $M^3_\varepsilon$ \cite[Theorem 1]{poincare} to conclude that  $\|\tilde{u}_{\varepsilon,i}\|_{W^{1,1}( M^3_\varepsilon)}$ is bounded by a constant independent of the index $i$.
Therefore, by passing to a subsequence (in $i$), $\tilde{u}_{\varepsilon,i}$ converges to a function $\tilde{u}_\varepsilon$ in $L^p(M^3_\varepsilon)$, for $p\in[1,\frac{3}{2})$, as $i\to\infty$. Because $\tilde{u}_{\varepsilon,i}$ solves the elliptic spacetime Laplace equation, uniform $L^p( M^3_\varepsilon)$ bounds for $\tilde{u}_{\varepsilon,i}$ imply uniform control in $C^{2,\varsigma}_{loc}( \widetilde{M^3_{2\varepsilon}})$, $\varsigma\in(0,1)$; here we have used the fact that $f_{\varepsilon,i}\to f_\varepsilon$ pointwise on the interior of $\widetilde{M^3_{2\varepsilon}}$ as $i\to\infty$.   Thus, $\tilde{u}_{\varepsilon,i}$ also converges subsequentially as $i\to\infty$ to $\tilde{u}_\varepsilon$ in $C^{2,\varsigma}_{loc}(\widetilde{M^3_{2\varepsilon}})$ for some $\varsigma\in(0,1)$, and the limit satisfies $\Delta \tilde{u}_\varepsilon+3f_\varepsilon|\nabla \tilde{u}_\varepsilon|=0$ on the interior of $\widetilde{M^3_{2\varepsilon}}$.

To obtain further properties of $|\nabla \tilde{u}_{\varepsilon}|$ observe that from \eqref{bdy term}, the fact that the boundary terms are nonpositive, together with \eqref{dec part} and \eqref{gradient bound} we obtain
\begin{align}
\begin{split}
    (R_0+c^{-1}\Lambda_c+1)|\Omega|
    \ge&
\int_{M^3_\varepsilon}\frac{\big|\nabla^2\tilde{u}_{\varepsilon,i}+f_{\varepsilon,i}|\nabla \tilde{u}_{\varepsilon,i}|g\big|^2}{|\nabla \tilde{u}_{\varepsilon,i}|}dV
\\=&\int_{M^3_\varepsilon}\frac{|\nabla^2\tilde{u}_{\varepsilon,i}|^2}{|\nabla \tilde{u}_{\varepsilon,i}|}-3f^2_{\varepsilon,i}|\nabla u_{\varepsilon,i}|dV
    \\ \ge&\int_{M^3_\varepsilon} 4|\nabla|\nabla \tilde{u}_{\varepsilon,i}|^\frac{1}{2}|^2dV-3C_1.
\end{split}
\end{align}
Since $|\nabla \tilde{u}_{\varepsilon,i}|^\frac{1}{2}$ is bounded in $H^1(M^3_\varepsilon)$ independent of $i$, it has a weak subsequential limit in 
$H^1(M^3_\varepsilon)$ 
which also converges strongly in $L^2(M^3_\varepsilon)$. In light of the fact that 
$f_{\varepsilon,i}$ blows-up uniformly on $M^3_{\varepsilon}\setminus\widetilde{M^3_{2\varepsilon}}$, the inequality \eqref{gradient bound} implies that the limit function $|\nabla \tilde{u}_{\varepsilon}|^{\frac{1}{2}}\equiv 0$ a.e. on this domain. Note that even though $\tilde{u}_{\varepsilon}$ may not be defined $M^3_{\varepsilon}\setminus\widetilde{M^3_{2\varepsilon}}$, with a slight abuse of notation we still denote the limit of $|\nabla \tilde{u}_{\varepsilon,i}|^{\frac{1}{2}}$ (which is defined globally on $M_{\varepsilon}^3$) in terms of $\tilde{u}_{\varepsilon}$. Moreover, for a.e. $\varepsilon$ the boundary $\partial \widetilde{M^3_{2\varepsilon}}$ is a Lipschitz submanifold \cite{Rifford}, and therefore by applying the trace theorem we find that $|\nabla\tilde{u}_{\varepsilon}|^{\frac{1}{2}}$ vanishes up to a set of measure zero on this set. It follows that $|\nabla \tilde{u}_{\varepsilon}|^{\frac{1}{2}}\in H^{1}_{0}(\widetilde{M^3_{2\varepsilon}})$ for all such $\varepsilon$; below, it will always be assumed that $\varepsilon$ satisfies this property. 

We are now ready to return to the integral inequality.
Taking the limit as $i\to \infty$ and applying Fatou's Lemma to \eqref{bdy term} yields
\begin{align}\label{e:iFatous}
     0 \ge & \int_{\widetilde{M_{2\varepsilon}^3}} \left[\frac{|\nabla ^2\tilde{u}_\varepsilon+f_\varepsilon|\nabla \tilde{u}_\varepsilon|g|^2}{|\nabla \tilde{u}_\varepsilon|}+|\nabla \tilde{u}_\varepsilon|\left(R+\frac{3(12-c^{-1})}{6-c^{-1}}f_{\varepsilon}^2\right)-\frac{3(8-c^{-1})}{6-c^{-1}}\langle \nabla \tilde{u}_{\varepsilon},\nabla f_{\varepsilon}\rangle \right] dV.
\end{align}
Consider the first two terms of \eqref{e:iFatous}. Using a Kato inequality similar to \cite[Remark 4.4]{HKKZ}, Proposition \ref{critical} which shows that the set of critical points for $\tilde{u}_{\varepsilon}$ is of measure zero, and the spectral hypothesis, we obtain
\begin{align} \label{hessian u}
\begin{split}
&\int_{\widetilde{M_{2\varepsilon}^3}} \left(\frac{|\nabla ^2\tilde{u}_\varepsilon+f_\varepsilon|\nabla \tilde{u}_\varepsilon|g|^2}{|\nabla \tilde{u}_\varepsilon|}+R|\nabla \tilde{u}_\varepsilon| \right)dV
        \\ \ge&  \int_{\widetilde{M_{2\varepsilon}^3}}\left(\frac{3|\nabla|\nabla \tilde{u}_\varepsilon|+f_\varepsilon\nabla \tilde{u}_\varepsilon|^2}{2|\nabla \tilde{u}_\varepsilon|}+ R|\nabla \tilde{u}_\varepsilon| \right) dV
        \\ =& \int_{\widetilde{M_{2\varepsilon}^3}}\left(6|\nabla|\nabla \tilde{u}_\varepsilon|^{\frac{1}{2}}|^2+6\langle\nabla|\nabla \tilde{u}_\varepsilon|^{\frac{1}{2}},f_\varepsilon|\nabla \tilde{u}_\varepsilon|^{-\frac{1}{2}}\nabla \tilde{u}_\varepsilon\rangle+ \left(R+\frac{3}{2}f_\varepsilon^2\right)|\nabla \tilde{u}_\varepsilon|\right)dV
        \\  =&\int_{\widetilde{M_{2\varepsilon}^3}}\left((6-c^{-1})\left|\nabla|\nabla \tilde{u}_\varepsilon|^\frac{1}{2}+\frac{3}{6-c^{-1}}f_\varepsilon|\nabla \tilde{u}_\varepsilon|^{-\frac{1}{2}}\nabla \tilde{u}_\varepsilon\right|^2+c^{-1}|\nabla|\nabla \tilde{u}_\varepsilon|^\frac{1}{2}|^2 \right.
        \\&\left.+R|\nabla \tilde{u}_\varepsilon|+\left(\frac{3}{2}-\frac{9}{6-c^{-1}}\right)f_\varepsilon^2|\nabla \tilde{u}_\varepsilon|\right)dV
        \\ \ge & 
      \int_{\widetilde{M_{2\varepsilon}^3}}\left(c^{-1}\Lambda_c|\nabla \tilde{u}_\varepsilon|+\left(\frac{3}{2}-\frac{9}{6-c^{-1}}\right)f_\varepsilon^2|\nabla \tilde{u}_\varepsilon|\right)dV.
\end{split}
\end{align}
Combining \eqref{e:iFatous} and \eqref{hessian u} then produces
\begin{align}\label{e:sthepsilonmass}
\begin{split}
 0  \ge&\int_{\widetilde{M^3_{2\varepsilon}}}\left[\left(c^{-1}\Lambda_c+\frac{9(8-c^{-1})}{2(6-c^{-1})}f_\varepsilon^2\right)|\nabla \tilde{u}_\varepsilon|-\frac{3(8-c^{-1})}{6-c^{-1}}\langle\nabla \tilde{u}_\varepsilon,\nabla f_\varepsilon\rangle \right] dV 
 \\ \ge& \int_{\widetilde{M_{2\varepsilon}^3}}c^{-1}\Lambda_c\left(1+\frac{9}{4\alpha^2}f_\varepsilon^2-\frac{3}{2\alpha^2}|\nabla f_\varepsilon|\right)|\nabla \tilde{u}_\varepsilon|dV,
\end{split}
\end{align}
where $\alpha=\sqrt{\frac{\Lambda_c(6-c^{-1})}{2c(8-c^{-1})}}$.

To proceed, we shall inspect the limit as $\varepsilon\rightarrow 0$. 
First note that applying Fatou's lemma to equation \eqref{gradient bound} yields
\begin{equation}
    \int_{\widetilde{M_{2\varepsilon}^3}}|\nabla \tilde{u}_\varepsilon|dV\le \liminf_{i\to\infty}\int_{\widetilde{M_{2\varepsilon}^3}}|\nabla \tilde{u}_{\varepsilon,i}|dV\le C_1.
\end{equation}
Moreover, since $\tilde{u}_{\varepsilon,i}$ has vanishing average on $\Omega$, the same is true of $\tilde{u}_{\varepsilon}$, and thus
utilizing again a version of the Poincar\'e inequality we obtain uniform $W^{1,1}(\widetilde{M^3_{2\varepsilon}})$ bounds for $\tilde{u}_\varepsilon$. 
By passing to a subsequence, $\tilde{u}_\varepsilon\to u$ in $L^{p}_{loc}(\bar{M}^3)$ for any $p\in[1,\frac{3}{2})$, where
\begin{equation}
\Bar{M}^3=\cup_\varepsilon\widetilde{M_{2\varepsilon}^3}=\{x\in M^3 \mid r(x)\in(-w_-,w_+)\}.     
\end{equation}
As before, since $\tilde{u}_{\varepsilon}$ satisfies the elliptic spacetime Laplacian, we may boot-strap to find subsequential convergence $\tilde{u}_{\varepsilon}\rightarrow u$ in $C^{2,\varsigma}_{loc}(\bar{M}^3)$, for some $\varsigma\in(0,1)$. Furthermore, $\Delta u+3f|\nabla u|=0$ on $\bar{M}^3$ with
\begin{equation}\label{aeoihnfoinqhg}
    f(x)=\begin{cases}
    \frac{2\alpha}{3}\tan \left(\alpha h(r(x)+w_-)-\frac{\pi}{2}\right) &\textnormal{   if } r(x)\le \min\{ -\frac{w_-}{2},-w_{-}+\frac{\pi}{6\alpha}\}
    \\ \frac{2\alpha}{3}\tan \left(\frac{\pi}{2}-\alpha h(w_+-r(x))\right)& \textnormal{   if } r(x)\ge \max\{ \frac{w_+}{2},w_+-\frac{\pi}{6\alpha}\}
    \\ \frac{2\alpha}{3}\tan \left(\mathbf{l}(r(x))\right)& \textnormal{  otherwise}
    \end{cases}
\end{equation}
since  $\lim_{\varepsilon\to0}w^\pm_\varepsilon=w_\pm$, 
where $\mathbf{l}(r)$ is a linear function which ensures that $f$ is Lipschitz.

Finally, if $w_-+w_+>\frac{\pi}{\alpha}$ then the slope of $\mathbf{l}$ would be strictly less than $\alpha$ in some region of nonzero measure, which produces
\begin{equation} \label{ode f}
  1+\frac{9}{4\alpha^2}f^2-\frac{3}{2\alpha^2}|\nabla f|>1+ \tan^2(\mathbf{l}(r(x)))-\sec^2(\mathbf{l}(r(x)))=0.
\end{equation}
Furthermore, taking the limit of \eqref{e:sthepsilonmass} with Fatou's lemma implies that
\begin{equation}
 0\ge\int_{\bar{M}^3}
c^{-1}\Lambda_c\left(1+\frac{9}{4\alpha^2}f^2-\frac{3}{2\alpha^2}|\nabla f|\right)|\nabla u|
dV .  
\end{equation}
Since $u$ is nontrivial as $\sup_{\Omega}|\nabla u|=1$, Proposition \ref{critical} shows that $|\nabla u|$ can only vanishes on a set of measure zero, and therefore a contradiction is obtained. We then have that $w_- +w_+ \leq \frac{\pi}{\alpha}$, from which the desired conclusion follows.

\subsection{Proof of Theorem \ref{sh}: the case of equality} We now assume that equality holds in \eqref{rad 3}. Since 
$w_- +w_+ \leq \frac{\pi}{\alpha}$, neither one of $w_{\pm}$ can be $\frac{\pi}{\alpha}$ as both $d(E_{\pm},\Sigma^2)$ must be positive. It follows that $w_- +w_+= \frac{\pi}{\alpha}$. Therefore, since $r(x)\in (-w_-,w_+)$ on $\bar{M}^3$, in this region \eqref{aeoihnfoinqhg} gives
\begin{equation} 
f(x)=\frac{2\alpha}{3}\tan \left(\alpha r(x)+\alpha w_- -\frac{\pi}{2}\right)=-\frac{2\alpha}{3}\cot(\alpha\rho(x)),
\end{equation}
where $\rho(x)=r(x)+w_-$.   
By inspecting \eqref{e:iFatous}-\eqref{e:sthepsilonmass}, using Fatou's lemma, and Proposition \ref{critical} we find that 
\begin{align}
\label{nabla u}
    \nabla |\nabla u|^\frac{1}{2}+\frac{3}{6-c^{-1}}f|\nabla u|^{-\frac{1}{2}}\nabla u=0
\end{align}
holds almost everywhere. Then integrating this equation along curves emanating from regular points for $u$, shows that in fact $|\nabla u|\neq0$ holds globally on $\bar{M}^3$.

Let  $\{e_1,e_2,e_3=\frac{\nabla u}{|\nabla u|}\}$ be an orthonormal frame.  From  \eqref{e:sthepsilonmass}, we deduce that $\nabla u$ is a multiple of $\nabla f$, and therefore $e_3=\nabla \rho$. This implies that $u$ is a function of $\rho$.  Furthermore, 
the first two lines of \eqref{hessian u} combined with \cite[Remark 4.4]{HKKZ} show that
\begin{equation}
    \nabla_{ij} u=0 \textnormal{ \;if }\; i\neq j,\quad\quad\quad \nabla_{11} u=\nabla_{22} u.
\end{equation}
Thus, $(\bar{M}^3,g)$ is a warped product with $g=d\rho^2+\phi^2(\rho)g_0$, where $g_0$ is a metric on $\Sigma^2$ and $\phi$ is a positive continuously differentiable function on $(0,\frac{\pi}{\alpha})$. 
Next observe that inserting $e_3$ into \eqref{nabla u} yields
\begin{equation}\label{gradient u}
    \nabla_{3} |\nabla u|=-\frac{6}{6-c^{-1}}f|\nabla u|, 
\end{equation}
which implies that up to a scaling constant we have 
\begin{equation}\label{aoihofinaoh}
  |\nabla u|(\rho)=[\sin\left(\alpha \rho\right)]^\frac{4c}{6c-1}.  
\end{equation}
Moreover, using the spacetime harmonic equation combined with \eqref{gradient u} produces
\begin{equation} \label{mean curv}
\frac{2\phi_\rho}{\phi}=H=\frac{\Delta u-\nabla_{33}u}{|\nabla u |}=\frac{3c^{-1}-12}{6-c^{-1}}f,
\end{equation} 
where $H$ is the mean curvature of level sets with respect to $e_3$.
Hence, it follows that up to scaling $\phi(\rho)=[\sin(\alpha \rho)]^\frac{4c-1}{6c-1}$. 
When $c\neq \frac{1}{4}$, due to the behavior of $\phi$ at the ends we find that $\Bar{M}^3$ cannot be strictly contained in a connected open manifold, and thus $\bar{M}^3=M^3$. 
If $c=\frac{1}{4}$, then $\phi=1$ and $\Bar{M}^3$ is a cylinder. After taking the limit in \eqref{hessian u}, we conclude that the function $|\nabla u|^\frac{1}{2}=\sin(\alpha \rho)$ minimizes the Rayleigh quotient \eqref{variation}. It follows that the $c$-spectral constant of $\bar{M}^3$ must be $\Lambda_c$. If $\bar{M}^3$ was properly contained in $M^3$, then its $c$-spectral constant would be strictly larger than that of $M^3$, which is also $\Lambda_c$. Therefore $\bar M^3=M^3$, in this case as well. See Figure \ref{equality}, for a depiction of the different types of behavior for the warped product according to the value of $c$.



\begin{figure}[H]
\includegraphics[totalheight=2.5cm]{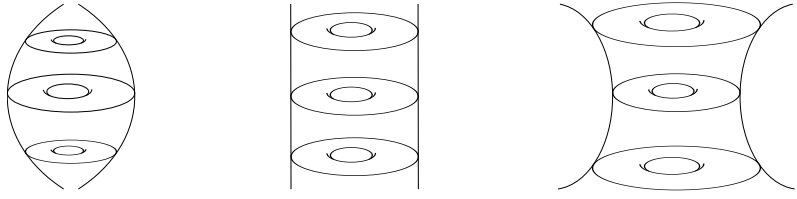}
\caption{Case of equality model geometries from left to right: $c>\frac14$, $c=\frac14$, $c<\frac14$.}
\label{equality}
\end{figure}


It remains to show that $(\Sigma^2 ,g_0)$ is a flat torus. According to \cite[Corollary 43]{Neill} we have
\begin{equation}
    \Ric(\partial_\rho,\partial_\rho)=-\frac{2\phi_{\rho\rho}}{\phi}.
\end{equation}
Moreover taking two traces of the Gauss equations, denoting the Gaussian curvature of $g_0$ by $K_0$, and noting that the second fundamental form of $(\Sigma^2, g_0)\hookrightarrow (M^3,g)$ is given by $II=\phi\phi_\rho g_0$ with mean curvature $H=2\phi^{-1}\phi_{\rho}$, yields
\begin{align}
\begin{split}
R=&2\Ric(\partial_\rho,\partial_\rho)+2\phi^{-2}K_0+|II|^2-H^2
    \\=&-2\phi^{-2}\left(\phi_\rho^2+2\phi\phi_{\rho\rho}\right)+2\phi^{-2}K_0.
    \end{split}
\end{align}
Let $L=-\Delta+cR-\Lambda_c$, then a tedious but elementary calculation using the explicit expressions for $|\nabla u|^{\frac{1}{2}}$ and $\phi$ along with the relation between $\alpha$ and $\Lambda_c$ in \eqref{rad 3}, shows that 
\begin{equation} 
\begin{split} \label{eigenfunction}
     L(|\nabla u|^{\frac{1}{2}})=&-\partial_{\rho\rho}(|\nabla u|^\frac{1}{2})-H\partial_\rho(|\nabla u|^\frac{1}{2})+(c R-\Lambda_c)|\nabla u|^\frac{1}{2}
    \\=& -\partial_{\rho\rho}(|\nabla u|^\frac{1}{2})-\frac{2\phi_\rho}{\phi}\partial_{\rho}(|\nabla u|^\frac{1}{2})-\left(\frac{2c}{\phi^2}(\phi_\rho^2+2\phi\phi_{\rho\rho})-2c\phi^{-2}K_0+\Lambda_c\right)|\nabla u|^\frac{1}{2}
    \\=&2cK_0\phi^{-2}|\nabla u|^\frac{1}{2}.
\end{split}
\end{equation} 
Again using the explicit expressions for function and metric, it may be verified that $|\nabla u|^{\frac{1}{2}}\in H^1_0(M^3)$ for $c>\frac{1}{6}$. Furthermore, as observed at the end of the previous paragraph, the Rayleigh quotient evaluated at $|\nabla u|^{\frac{1}{2}}$ agrees with $\Lambda_c$, and thus $L(|\nabla u|^{\frac{1}{2}})=0$. Hence $K_0 =0$, and $(\Sigma^2,g_0)$ is a flat torus. This completes the `only if' direction in the case of equality statement.

To verify the `if' direction, it must be shown that given $\alpha>0$, $c>\tfrac16$, and a flat torus $(T^2,g_0)$, the $c$-spectral constant $\Lambda_c$ of the warped product $(M^3,g)=((0,\tfrac{\pi}{\alpha})\times T^2,d\rho^2+\phi^2(\rho)g_0)$ agrees with $\Lambda'_c=\frac{2c(8-c^{-1})\alpha^2}{6-c^{-1}}$.
According to \eqref{eigenfunction}, the function $u_0=|\nabla u|^\frac{1}{2}$ given by \eqref{aoihofinaoh} satisfies 
\begin{equation}
(-\Delta+cR)u_0=\Lambda'_c u_0,
\end{equation}
and $u_0>0$ on $M^3$. This implies that $\Lambda_c \leq \Lambda'_c$. If $\Lambda_c < \Lambda'_c$, then there exists a test function with compact support having Rayleigh quotient strictly less than $\Lambda'_c$. We can therefore find a smooth manifold with boundary $\widetilde{M^3_{\varepsilon}}$ containing this support, and conclude that the principal eigenvalue $\lambda$ for this domain with Dirichlet boundary conditions is strictly less than $\Lambda'_c$. Let $\hat{u}>0$ be the corresponding principal eigenfunction for this domain, so that
\begin{equation}
    (-\Delta+cR)\hat{u}=\lambda \hat{u} \quad\text{ }\text{ in }\text{ }\widetilde{M^3_{\varepsilon}},\quad\quad\quad\quad\quad
    \hat{u}=0\quad\text{ }\text{ on }\text{ }\partial\widetilde{M^3_{\varepsilon}}.
\end{equation}
Observe that
\begin{equation}
\begin{split}
     \Delta (\hat{u} u_0^{-1})=&u_0^{-1}\Delta\hat{u}+2\nabla \hat{u}\cdot \nabla u_0^{-1}+\hat{u}\Delta u_0^{-1}
    \\=&u_0^{-1}(cR-\lambda) \hat{u}-2\nabla(\hat{u}u_0^{-1})\cdot \nabla \log u_0-2\hat{u}u_0^{-3}|\nabla u_0|^2-\hat{u}u_0^{-2}\Delta u_0+2\hat{u}u_0^{-3}|\nabla u_0|^2
    \\=&(\Lambda'_c-\lambda)\hat{u}u_0^{-1}-2\nabla(\hat{u}u_0^{-1})\cdot \nabla \log u_0.
\end{split}
\end{equation}
Thus, by applying the maximal principal to $\hat{u}u_0^{-1}$ on $\widetilde{M^3_{\varepsilon}}$, we see that this function must vanish, which is a contradiction. Hence $\Lambda_c =\Lambda'_c$.


\section{The Spinorial Callias Operator Approach}
\label{sec3}

The purpose of this section is to establish Theorem \ref{T:spinors}. It will be assumed in what follows that $n>1$, as the inequality \eqref{aoiwfoinah} for $n=1$ is trivially satisfied. 
Before beginning the proof, we will first introduce the requisite machinery and notation. 

\subsection{Background}
The following fact describes the fundamental property, from the perspective of this work, of bands that admit the $\hat{A}$-overtorical condition. It is well known, see \cite[Example 7.5]{CecchiniZeidler1}.

\begin{proposition}\label{p:ahatarea}
Suppose that $(M^n,\partial_\pm M^n,g)$ is an odd dimensional Riemannian spin band which is $\hat{A}$-overtorical. For any $\delta>0$, there exists a Hermitian bundle $\mathcal{E}$ over $M^n$ with a metric compatible connection $\nabla^{\mathcal{E}}$ such that
\begin{enumerate}
    \item the curvature $R^{\mathcal{E}}$ of $(\mathcal{E},\nabla^{\mathcal{E}})$ satisfies $|R^{\mathcal{E}}|<\delta$,
    \item the wedge product of the $\mathbf{\widehat{A}}$ form of $\partial_-M^n$ with the Chern character of $\mathcal{E}|_{\partial_- M^n}$ satisfies
    \begin{equation}\label{aonhfoianhg}
        \int_{\partial_-M^n} \mathbf{\widehat{A}}(\partial_-M^n)\wedge \mathrm{ch}(\mathcal{E}|_{\partial_-M^n})\neq 0.
    \end{equation}
\end{enumerate}
\end{proposition}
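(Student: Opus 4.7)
The plan is to construct $\mathcal{E}$ as the pushforward, along a suitably chosen finite cover of $M^n$, of the pullback of a fixed Hermitian bundle on $T^{k-1}$; the cover will be engineered so that the resulting curvature is as small as desired. Let $\pi \colon M^n \to T^{k-1}$ denote the composition of the $\hat{A}$-overtorical map $F$ with projection to the torus factor. As a first step, I would fix a model bundle $\mathcal{E}_0$ on $T^{k-1}$, equipped with a metric compatible connection of bounded curvature in the ambient metric, and with nonzero top-degree Chern character. The assumption that $n$ is odd together with $n - k \equiv 0 \pmod 4$ forces $k-1$ to be even, so writing $T^{k-1}$ as a product of $2$-tori allows me to take $\mathcal{E}_0$ to be the exterior tensor product of standard degree-one complex line bundles.

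The key trick to arrange small curvature is to pass to a cover. Given a large integer $N$, the self-covering $\pi_N \colon T^{k-1} \to T^{k-1}$ given by multiplication by $N$ pulls back along $\pi$ to a finite cover
\begin{equation}
p \colon \tilde{M}^n := M^n \times_{T^{k-1}} T^{k-1} \longrightarrow M^n,
\end{equation}
with canonical lift $\hat{\pi} \colon \tilde{M}^n \to T^{k-1}$ satisfying $\pi_N \circ \hat{\pi} = \pi \circ p$ and $|d\hat{\pi}| = N^{-1} |d\pi|\circ p$. I would then define
\begin{equation}
\mathcal{E} := p_{*}\bigl(\hat{\pi}^{*}\mathcal{E}_0\bigr)
\end{equation}
endowed with the induced Hermitian metric and pushforward connection. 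Since $p$ is a local isometry, the curvature of the pushforward decomposes sheetwise, and a pointwise bound of the form $|R^{\mathcal{E}}|_g \le C N^{-2}$ follows; choosing $N$ large enough yields property (1).

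For condition (2), I would combine the compatibility of the Chern character with pushforward along a finite cover, the projection formula, and the identity $p^{*}\mathbf{\widehat{A}}(\partial_-M^n) = \mathbf{\widehat{A}}(\partial_-\tilde{M}^n)$ to rewrite
\begin{equation}
\int_{\partial_-M^n} \mathbf{\widehat{A}}(\partial_-M^n) \wedge \mathrm{ch}(\mathcal{E}|_{\partial_-M^n}) = \int_{\partial_-\tilde{M}^n} \mathbf{\widehat{A}}(\partial_-\tilde{M}^n) \wedge \hat{\pi}^{*}\mathrm{ch}(\mathcal{E}_0).
\end{equation}
Applying the defining property of the $\hat{A}$-degree to the identity $\pi_N \circ \hat{\pi}|_{\partial_-\tilde{M}^n} = \pi|_{\partial_-M^n} \circ p|_{\partial_-\tilde{M}^n}$ shows that the $\hat{A}$-degree of $\hat{\pi}|_{\partial_-\tilde{M}^n}$ equals that of $F|_{\partial_-M^n}$, which is nonzero by hypothesis. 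Multiplying by the nonzero top-degree integral of $\mathrm{ch}(\mathcal{E}_0)$ then gives the required nonvanishing.

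The main obstacle I anticipate is the careful bookkeeping of two functoriality identities: that the Chern character commutes with pushforward along a finite cover equipped with its induced connection, and the multiplicativity of the $\hat{A}$-degree through the covering $\pi_N$. Both are standard once pushforwards are described sheetwise, and once in hand the rest of the argument is mechanical.
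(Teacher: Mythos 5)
The paper does not actually supply a proof of this proposition: it is stated as "well known" with a citation to Cecchini--Zeidler, Example 7.5, so there is no internal argument to compare against. Your proposal is a reasonable reconstruction of the standard Gromov--Lawson enlargeability mechanism (pass to a contracting finite cover, pull back a bundle of nontrivial topology from the torus, push forward to get small curvature downstairs), and the skeleton is sound; but the final nonvanishing step has a real gap.

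The problem is in the passage from
\[
\int_{\partial_-\tilde M^n} \mathbf{\widehat{A}}(\partial_-\tilde M^n)\wedge \hat\pi^*\mathrm{ch}(\mathcal{E}_0)
\]
to ``the $\hat A$-degree of $\hat\pi$ times the top-degree integral of $\mathrm{ch}(\mathcal{E}_0)$.'' Unless $\mathrm{ch}(\mathcal{E}_0)$ is concentrated in degree $0$ and top degree $k-1$, the integral picks up contributions from every even degree, each paired against the corresponding component of $\mathbf{\widehat{A}}$. Your proposed $\mathcal{E}_0$ (an exterior tensor product of degree-one line bundles over the $2$-torus factors) has $\mathrm{ch}(\mathcal{E}_0)=\prod_j(1+\alpha_j)$, which is supported in \emph{all} even degrees; so the quantity you compute is not a scalar multiple of $\deg_{\hat A}(\hat\pi)$, and the asserted nonvanishing does not follow. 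In fact the terms scale differently with $N$: using $\hat\pi^*\pi_N^*=p^*\pi^*$ and $\deg p = N^{k-1}$, the degree-$2j$ piece contributes a term proportional to $N^{k-1-2j}$, so for large $N$ the lowest nonvanishing degree dominates, not the top one. (Your identification $\deg_{\hat A}(\hat\pi)=\deg_{\hat A}(\pi|_{\partial_-M^n})$ is itself fine, precisely because $\deg p = N^{k-1}$.) There are two clean repairs: (i) replace $\mathcal{E}_0$ by the pullback of a Bott generator under a degree-one collapse map $T^{k-1}\to S^{k-1}$, so that $\mathrm{ch}(\mathcal{E}_0)=r + c\,\omega$ with no intermediate terms, and then observe that the integral becomes $r\,\hat A(\partial_-\tilde M^n)+c\,\deg_{\hat A}(\hat\pi)= N^{k-1}r\,\hat A(\partial_-M^n)+c\,\deg_{\hat A}(\pi|_{\partial_-M^n})$, which is nonzero for all large $N$ regardless of whether $\hat A(\partial_-M^n)$ vanishes; or (ii) argue directly that the full integral is a polynomial in $N$ whose constant term is $c\,\deg_{\hat A}(\pi|_{\partial_-M^n})\neq 0$, hence nonzero for generic (and in particular, large) $N$. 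Either way you should make explicit that you are choosing $N$ large (or generic), since at a specific $N$ the intermediate terms could conspire to cancel.

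One further point worth a sentence in a careful write-up: $\tilde M^n$ need not be connected, and the lift $\hat\pi$, the covering degree, and the pushforward all still behave correctly in that case because the fiber product has a constant fiber of cardinality $N^{k-1}$ over every point, including boundary points; but stating this avoids the appearance of tacitly assuming connectivity.
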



The next task is to introduce the relevant bundles and structure required to describe the spinors used to prove Theorem \ref{T:spinors}. We will closely follow the exposition in \cite[Sections 2 and 3]{CecchiniZeidler1}. Consider an odd dimensional Riemannian band $(M^n,\partial_\pm M^n,g)$ with a spin structure. Let $S'\to M^n$ denote the associated complex spinor bundle,
equipped with the connection induced by the Levi-Civita connection. Given a Hermitian bundle $\mathcal{E}\to M^n$ with a metric connection, consider the bundle $S=(S'\otimes \mathcal{E})\oplus(S'\otimes \mathcal{E})=:S^-\oplus S^+$. This bundle may be equipped with an action of the Clifford algebra which interchanges its summands according to the formula 
\begin{equation}
    v \text{ }\!\cdot =\left(
    \begin{array}{cc}
    0&v\cdot\otimes \text{ }\!\mathbb{I}_{\mathcal{E}}\\
    v\cdot\otimes \text{ }\!\mathbb{I}_{\mathcal{E}}&0
    \end{array}
    \right),
\end{equation}
where $v\in TM^n$ is a vector and $\mathbb{I}_{\mathcal{E}}$ denotes the identity on $\mathcal{E}$. Here and throughout, we use the sign convention $v\cdot w+w\cdot v=-2g(v, w)$ for vectors $v,w$. The bundle $S$ also carries a natural involution $\sigma$ defined by 
\begin{equation}\label{e:involution}
    \sigma=\left(
    \begin{array}{cc}
    0&-i\\
    i&0
    \end{array}
    \right),
\end{equation}
where we are implicitly making reference to the direct sum description of $S$. In a standard manner, $S$ inherits a connection from $M^n$ and $\mathcal{E}$, and one may form the corresponding Dirac operator $\slashed{\partial}$. Given a Lipschitz function $f$ on $M^n$, we may also consider the {\emph{Callias operator}}
\begin{equation}
\mathcal{B}_{f}\varphi=\slashed\partial\varphi+f\sigma\varphi.
\end{equation}
Notice that there is a decomposition $\mathcal{B}_f=\mathcal{B}_f^+\oplus \mathcal{B}_f^-$ where $\mathcal{B}_f^\pm$ maps $S^\pm$ to $S^\mp$.

Appropriate boundary conditions are required to set up an elliptic boundary value problem, namely we will consider
\begin{equation}\label{e:spinorpde}
    \begin{cases}
        {\mathcal{B}_f}\varphi=0 &\text{ in }M^n\\
        \mp\nu\cdot \sigma\varphi=\varphi & \text{ on }\partial_\pm M^n
    \end{cases}
\end{equation}
where $\nu$ denotes the unit outward normal to $\partial M^n$. 
This yields elliptic boundary value problems associated to $\mathcal{B}^\pm_f$ which are adjoint to each other, and therefore 
\begin{equation}
\mathrm{Index}(\mathcal{B}^\pm_f)=\mathrm{dim}(\mathrm{ker}(\mathcal{B}^\pm_f))-\mathrm{dim}(\mathrm{ker}(\mathcal{B}^\mp_f)). 
\end{equation}
According to \cite[Corollary 3.10]{CecchiniZeidler1}, the expression in \eqref{aonhfoianhg} is the Fredholm index of the boundary value problem associated to $\mathcal{B}^{\pm}_f$. Hence, if $(M^n,\partial_{\pm} M^n,g)$ is $\hat{A}$-overtorical, Proposition \ref{p:ahatarea} implies that there is a source of Hermitian bundles $\mathcal{E}$ such that there are nontrivial solutions to this boundary value problem.

\begin{proposition}\label{p:spinorexistence}
Suppose that $(M^n,\partial_\pm M^n,g)$ is an odd dimensional Riemannian spin band which is $\hat{A}$-overtorical. Let $\mathcal{E}$ be a Hermitian bundle given by Proposition \ref{p:ahatarea}. Then for any Lipschitz function $f$ and $\varsigma\in(0,1)$, there exists a nontrivial $C^{1,\varsigma}$ solution to \eqref{e:spinorpde}.
\end{proposition}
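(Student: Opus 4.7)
The plan is to obtain the nontrivial solution as an element of the kernel of an elliptic Fredholm boundary value problem with nonzero index, and then upgrade the regularity using the Lipschitz hypothesis on $f$.

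First I would observe that the boundary condition $\mp \nu\cdot\sigma\varphi = \varphi$ is a local (chiral-type) boundary condition for the Callias operator $\mathcal{B}_f$. A direct symbol computation, exactly as carried out in \cite[Section 3]{CecchiniZeidler1}, shows that it satisfies the Lopatinskii--Shapiro condition for the first order operators $\mathcal{B}_f^\pm\colon C^\infty(M^n;S^\pm)\to C^\infty(M^n;S^\mp)$. Since $M^n$ is compact with smooth boundary, this yields Fredholm operators on the appropriate Sobolev spaces $H^s$, for each $s\ge 1$, with index independent of $s$ and of lower order perturbations such as the zeroth order bundle map $f\sigma$. In particular,
\begin{equation}
\mathrm{Index}(\mathcal{B}_f^+) = \mathrm{Index}(\mathcal{B}_0^+),
\end{equation}
and the latter is computed by the relative index formula \cite[Corollary 3.10]{CecchiniZeidler1} to equal the characteristic number in \eqref{aonhfoianhg}.

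Next I would invoke Proposition \ref{p:ahatarea}, which guarantees that the bundle $\mathcal{E}$ is chosen so that the integral in \eqref{aonhfoianhg} is nonzero. Consequently,
\begin{equation}
\dim\ker(\mathcal{B}_f^+) - \dim\ker(\mathcal{B}_f^-) \neq 0,
\end{equation}
so at least one of $\ker(\mathcal{B}_f^+)$ or $\ker(\mathcal{B}_f^-)$ contains a nonzero element $\varphi$. Extending this spinor trivially to the other chirality produces a nontrivial solution of the full boundary value problem \eqref{e:spinorpde} in the Sobolev class $H^1$.

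Finally I would promote $\varphi$ from a weak solution to a $C^{1,\varsigma}$ solution. Because $\mathcal{B}_f = \slashed\partial + f\sigma$ has only a zeroth order term with Lipschitz (in particular $C^{0,\varsigma}$ for every $\varsigma\in(0,1)$) coefficient, and the principal symbol of $\slashed\partial$ together with the boundary projector are smooth, standard Schauder estimates for first order elliptic systems with local elliptic boundary conditions yield interior and boundary estimates of the form $\|\varphi\|_{C^{1,\varsigma}}\le C(\|\mathcal{B}_f\varphi\|_{C^{0,\varsigma}}+\|\varphi\|_{C^0})$ whenever the right hand side makes sense; bootstrapping from $H^1$ to $C^0$ and then into the Hölder scale delivers $\varphi\in C^{1,\varsigma}$ for any $\varsigma\in(0,1)$.

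The main obstacle I anticipate is the bookkeeping at the boundary: verifying the Lopatinskii--Shapiro condition for the chiral projector $\tfrac12(1\mp\nu\cdot\sigma)$ paired with $\mathcal{B}_f^\pm$, and confirming that the Callias perturbation $f\sigma$ does not spoil the Fredholm property or alter the index. However, both points are essentially in place in \cite[Sections 2--3]{CecchiniZeidler1}, which treats Callias operators with continuous potentials under exactly these boundary conditions, so the proposition reduces to citing their framework and applying Proposition \ref{p:ahatarea} to conclude that the index is nonzero.
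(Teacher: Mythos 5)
Your argument takes the same route as the paper: invoke the Cecchini--Zeidler index theory (their Corollary 3.10) to identify the Fredholm index of the elliptic boundary value problem for $\mathcal{B}_f^\pm$ under the chiral boundary condition with the topological quantity in \eqref{aonhfoianhg}, which Proposition \ref{p:ahatarea} renders nonzero, hence the kernel is nontrivial. The Schauder bootstrap you append to obtain $C^{1,\varsigma}$ regularity from the Lipschitz coefficient is correct and supplies a step the paper leaves implicit.
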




\subsection{Proof of Theorem \ref{T:spinors}}
Let $f$ be a Lipschitz function on $M^n$ such that $f$ is positive on $\partial_+M^n$ and negative on $\partial_- M^n$, to be determined later. Proposition \ref{p:spinorexistence} yields a nontrivial spinor $\varphi$ satisfying \eqref{e:spinorpde}.
In order to express the associated B{\"o}chner-Lichnerowitz-Weitzenbock formula, let $\mathcal{P}$ denote the Penrose operator acting on spinors according to the formula
\begin{equation}
    \mathcal{P}_X\varphi=\nabla_X\varphi-\frac{1}{n}X\cdot \slashed{\partial}\varphi,
\end{equation}
for any vector field $X$. Fix $p\in M^n$ and let $\{e_l\}_{l=1}^n$ be an orthonormal basis at $p$. Consider the quantities $v_l=e_l\cdot\nabla_{e_l} \varphi+\frac{f}{n} \sigma \varphi$, and note that according to equation \eqref{e:spinorpde} we have $\sum_{l=l}^{n} v_l=0$. Write $v=(v_1,\bar v)$ and observe that by Cauchy-Schwarz $(n-1)|\bar{v}|^2\ge|v_1|^2$. Thus $|v|^2=v_1^2+|\bar v|^2\geq \frac{n}{n-1}|v_1|^2$, and we arrive at the following Kato-type inequality
\begin{equation}
|\mathcal{P}\varphi|^2=\sum_{l=1}^{n}\left|\nabla_l \varphi-\frac{f}{n}e_l\cdot \sigma \varphi\right|^2
    \geq \frac{n}{n-1}\left|\nabla_{1} \varphi-\frac{f}{n}e_1\cdot \sigma \varphi\right|^2.
\end{equation}
By expanding the right-hand side and denoting $\beta=\frac{n}{n-1}-\frac{1}{4c}>0$, it follows that
\begin{equation}\label{e:spinorkato}
    \begin{split}
    |\mathcal{P}\varphi|^2\geq&
     \frac{n}{n-1}|\nabla_{1} \varphi|^2-\frac{2}{n-1}f\langle \nabla_1 \varphi, e_1\cdot \sigma \varphi\rangle
   +\frac{1}{n(n-1)}f^2|\varphi|^2
   \\=& \frac{1}{4c}|\nabla_{1}\varphi|^2+\beta\left|\nabla_{1} \varphi-\frac{1}{\beta(n-1)}fe_1\cdot \sigma \varphi\right|^2+\left(\frac{1}{n(n-1)}-\frac{1}{\beta(n-1)^2}\right)f^2|\varphi|^2
   \\ \ge& \frac{1}{4c}|\nabla_1 \varphi|^2+\underbrace{\left(\frac{1}{n(n-1)}-\frac{1}{\beta(n-1)^2}\right)}_{\beta_1}f^2|\varphi|^2.
   \end{split}
\end{equation}
Since $|\varphi|$ is Lipschitz, Radamacher's Theorem ensures that it is differentiable almost everywhere. Now if $\nabla| \varphi|\neq0$ at $p$, then we may choose a basis with $e_1$ given by the unit gradient so that $|\nabla_{1}|\varphi||=|\nabla |\varphi||$, whereas if $\nabla|\varphi|=0$ at $p$ then this equality holds trivially for any choice of $e_1$. Thus, \eqref{e:spinorkato} implies that
\begin{equation}\label{e:spinorkato2}
    |\mathcal{P} \varphi|^2\geq\frac{1}{4c}|\nabla |\varphi||^2+\beta_1 f^2|\varphi|^2
\end{equation}
holds almost everyhwere.

According to \cite[Proposition 4.2]{CecchiniZeidler1} and the proof of \cite[Theorem 4.3]{CecchiniZeidler1}, we have as a consequence of the B{\"o}chner-Lichnerowitz-Weitzenbock formula that
\begin{equation}
    \begin{split}
       &\int_{\partial_-M ^n} (f-\frac{n}{2(n-1)}H)|\varphi|^2dA-\int_{\partial_+M^n}(f+\frac{n}{2(n-1)}H)|\varphi|^2dA
        \\ = &
 \int_{M^n}\left( \frac{n}{n-1}\left(|\mathcal{P} \varphi|^2+\langle \varphi,\frac{R}{4} \varphi+\mathcal{R}^{\mathcal{E}} \varphi\rangle\right)+
 \langle \varphi,f^2 \varphi+\nabla f\cdot\sigma \varphi\rangle \right)dV
   \\ \ge & \int_{M^n}\left( \frac{n}{n-1}\left(\frac{1}{4c}|\nabla |\varphi||^2+\beta_1f^2|\varphi|^2+\frac{R}{4}|\varphi|^2-\gamma_n|R^{\mathcal{E}}||\varphi|^2 \right)
   +\langle \varphi,f^2 \varphi+\nabla f \cdot \sigma \varphi\rangle \right)
   dV,
   \label{main spinor}
    \end{split}
\end{equation}
where $\mathcal{R}^{\mathcal{E}}$ is the $\mathcal{E}$-curvature acting on sections of $S$ and $\gamma_n$ is a dimensional constant encountered when applying Cauchy-Schwarz to $\langle\varphi,\mathcal{R}^{\mathcal{E}}\varphi\rangle$. 
To continue, notice that from an integration by parts the following identity holds
\begin{equation}
\begin{split}
    \int_{\partial M^n}\langle \nu\cdot\varphi,f\sigma \varphi\rangle dA
 =&\int_{M^n}\left(\langle \slashed{\partial} \varphi,f\sigma \varphi \rangle-
    \langle  \varphi,\slashed{\partial}f\sigma \varphi \rangle\right) dV
    \\=&-\int_{M^n}\left(2f^2|\varphi|^2+\langle \varphi,\nabla f\cdot\sigma \varphi\rangle\right) dV,
\end{split}
\label{divergence}
\end{equation}
where we have made use of the fact that $\sigma X\cdot=-X\cdot\sigma$ for vector fields $X$. Leveraging the boundary condition for $\varphi$, one may multiply \eqref{divergence} by $\frac{n}{n-1}\beta_1$ and sum the result with \eqref{main spinor} to obtain
\begin{equation}
    \begin{split}
    \label{n beta1}
        &\int_{\partial_-M ^n} \left[\left(1-\frac{n\beta_1}{n-1}\right)f-\frac{n}{2(n-1)}H\right]|\varphi|^2dA
        \\&-\int_{\partial_+M^n}\left[\left(1-\frac{n\beta_1}{n-1}\right)f+\frac{n}{2(n-1)}H\right]|\varphi|^2dA
        \\ \ge&\int_{M^n} \frac{n}{n-1}\left(\frac{1}{4c} |\nabla|\varphi||^2+\frac{R}{4}|\varphi|^2-\gamma_n|R^{E}||\varphi|^2\right)dV
 \\&  +\int_{M^n}\left\langle \varphi, \underbrace{\left(1-\frac{n\beta_1}{n-1}\right)}_{\beta_2}f^2 \varphi+\left(1-\frac{n\beta_1}{n-1}\right)\nabla f\cdot\sigma\varphi\right\rangle dV.
    \end{split}
\end{equation}
Since $\beta>0$, we find that $\beta_1=\frac{1}{n(n-1)}-\frac{1}{\beta(n-1)^2}< \frac{1}{n(n-1)}$, and so
\begin{equation}
   \beta_2= 1-\frac{n\beta_1}{n-1}> 1-\frac{1}{(n-1)^2}>0.
\end{equation}
It follows that, provided $\pm f$ is sufficiently large on $\partial_\pm M^n$, the boundary terms of \eqref{n beta1} are nonpositive.

We now proceed by contradiction and assume that there exists an $\varepsilon>0$ such that
\begin{equation}
d(\partial_- M^n,\partial_+ M^n)> w:=\pi \sqrt{\frac{4\beta_2c(n-1)}{n\Lambda_c}}+\varepsilon=2\pi\sqrt{\frac{c}{\Lambda_c}\left(\frac{(4c-1)n+2-4c}{(4c-1)n+1}\right)}+\varepsilon.
\end{equation}
Next, define a sequence of bounded Lipschitz functions $f_j$ on $M^n$, which satisfy a certain differential inequality and have the property that $\pm f_j\to\infty$ on $\partial_\pm M^n$ as $j\to\infty$, in the following way. 
Let $r_\pm(x)=d(x,\partial_\pm M^n)$, and for each $j$ consider
\begin{equation}
    f_j(x)=\begin{cases}
    -\frac{\pi}{w}\cot\left( \frac{\pi}{w}r_-(x)+\frac{1}{j}\right) &\text{ if } r_-(x) \le \frac{w}{\pi}(\frac{\pi}{2}-\frac{1}{j})
   \\  \frac{\pi}{w}\cot\left( \frac{\pi}{w}r_+(x)+\frac{1}{j}\right) &\text{ if } r_+(x) \le \frac{w}{\pi} (\frac{\pi}{2}-\frac{1}{j})
   \\ 0 &\text{ otherwise}
   \end{cases}.
\end{equation}
For each $j$, we may apply Proposition \ref{p:spinorexistence} to obtain a nontrivial solution $\varphi_j$ to \eqref{e:spinorpde}. Now fix a compact subset $\Omega\subset\mathring{M}^n$, where $\mathring{M}^n$ denotes interior, such that for all sufficiently large $j$ we have
\begin{equation}\label{ofhoqihoihpoqh}
\begin{split}
   \frac{3}{2}f_j^2-|\nabla f_j|\ge 1 \quad\textnormal{\; on \;} M^n\setminus\Omega,
    \\ \beta_2 f_j^2-\beta_2|\nabla f_j|+\frac{n\Lambda_c}{4c(n-1)}\ge C_\varepsilon
    \quad\textnormal{\; on \;} M^n,
\end{split}
\end{equation}
where $C_\varepsilon >0$ depends on $\varepsilon$, $n$, $c$, and $\Lambda_c$.
Then equations \eqref{divergence} and \eqref{ofhoqihoihpoqh}, together with the boundary condition of \eqref{e:spinorpde} and sign of $f_j |_{\partial_{\pm} M^n}$, imply
\begin{equation}\label{e:fjomega}
    \int_{M^n\setminus\Omega} \left(\frac{1}{2}f_j^2+1\right) |\varphi_j|^2dV
     \le\int_{M^n\setminus\Omega}(2f^2_j-|\nabla f_j|)|\varphi_j|^2dV 
     \le  \int_{\Omega}(|\nabla f_j|-2f^2_j)|\varphi_j|^2dV .
\end{equation}
Note that $\max_{\Omega}|\varphi_j|\neq0$, otherwise this estimate implies that $\varphi_j$ vanishes globally.
Thus by appropriate rescaling, it may be assumed without loss of generality that $\max_{\Omega} |\varphi_j |=1$, and \eqref{ofhoqihoihpoqh} along with \eqref{e:fjomega} yield
\begin{equation}\label{aownofinohgq}
\int_{\Omega}|\varphi_j|^2+\int_{M^n \setminus\Omega} \left(\frac{1}{2}f_j^2+1\right) |\varphi_j|^2dV\leq  \left(\frac{n\Lambda_c}{4c(n-1)\beta_2}+1\right)|\Omega|.
\end{equation}
It then follows from \eqref{n beta1}, \eqref{ofhoqihoihpoqh}, and \eqref{aownofinohgq} that
\begin{align}\label{main estimate}
\begin{split}
&\frac{n}{4c(n-1)}\int_{M^n}|\nabla |\varphi_j||^2 dV
    + \int_{\partial M^n}\Upsilon_j |\varphi_j|^2dA
    \\ \le& \int_{M^n} \left(-\beta_2 f_j^2+\beta_2|\nabla f_j|\right)|\varphi_j|^2 dV +\frac{n}{n-1}\int_{M^n}\left(\frac{|R|}{4}+\gamma_n |R^{\mathcal{E}}|\right)|\varphi_j|^2dV
    \\ \leq & C_1
    \end{split}
\end{align}
for some constant $C_1$ independent of $j$, where  
\begin{equation}
\Upsilon_j=\min_{\partial M^n}\left(\beta_2 |f_j|-\frac{n}{2(n-1)}|H|\right)   
\end{equation}
which satisfies $\Upsilon_j \rightarrow \infty$ as $j\rightarrow \infty$.


The inequalities \eqref{aownofinohgq} and \eqref{main estimate} show that the sequence $|\varphi_j|$ is uniformly bounded in $H^1(M^n)$, and thus $|\varphi_j|$ weakly subconverges to a function $|\pmb{\varphi}|$ in $H^1(M^n)$ with strong convergence in $H^s(M^n)$ for any $s\in[\frac{1}{2},1)$, see \cite[Theorem 9.22]{Folland} or \cite[Corollary 7.2]{Hitch}. Moreover, since the trace map $\tau: H^s(M^n)\to H^{s-\frac{1}{2}}(\partial M^n)$ is continuous \cite[Proposition 3.8]{Hitch}, we find that $|\varphi_j|$ converges subsequentially to $\tau(|\pmb{\varphi}|)$ in $L^2(\partial M^n)$. However,
since $\Upsilon_j\rightarrow \infty$ we find that \eqref{main estimate} yields $\tau(|\pmb{\varphi}|)=0$ on $\partial M^n$, and hence $|\pmb{\varphi}|\in H^1_0(\mathring{M}^n)$. Then taking the limit in \eqref{n beta1} while utilizing weak lower semi-continuity of the $H^1$-norm, strong convergence in $L^2$, Fatou's lemma together with \eqref{ofhoqihoihpoqh}, and applying the definition of the $c$-spectral constant produces
\begin{align}
\begin{split}
0\ge& \int_{M^n}\left(\frac{n}{4c(n-1)}\left(|\nabla |\pmb{\varphi}||^2 +cR|\pmb{\varphi}|^2 \right)
+\left(\beta_2 f^2 -\beta_2 |\nabla f| -\frac{n\gamma_n}{n-1}|R^{\mathcal{E}}|\right)|\pmb{\varphi}|^2\right)dV\\
\geq &
\int_{M^n}\left(\beta_2 f^2-\beta_2|\nabla f|+\frac{n\Lambda_c}{4c(n-1)}-\frac{n\gamma_n}{n-1}|R^{\mathcal{E}}|\right)|\pmb{\varphi}|^2dV
        \\  \ge& \int_{M^n}\left(C_\varepsilon -\frac{\delta n\gamma_n}{n-1}\right)|\pmb{\varphi}|^2dV,
\end{split}
\end{align}
where in the last line we used Proposition \ref{p:ahatarea}. By choosing $\delta<< C_{\varepsilon}$, we arrive at a contradiction since $\max_{\Omega}|\pmb{\varphi}|=1$. It follows that
\begin{equation}
d(\partial_- M^n,\partial_+ M^n)\le 2\pi\sqrt{\frac{c}{\Lambda_c}\left(\frac{(4c-1)n+2-4c}{(4c-1)n+1}\right)}.
\end{equation}


\section{The $\mu$-Bubble Approach}
\label{sec4}

In this section we will establish Theorem \ref{T:mu bubble}, and for convenience will use the notation $\Lambda=\Lambda_{\frac{1}{2}}$. The result is trivial if $n=1$, and thus it will be assumed that $n>1$ below. Suppose that the conclusion of the theorem is false, then there exists $\varepsilon>0$ such that
\begin{equation}
d(\partial_- M^n,\partial_+ M^n)\ge\pi\sqrt{\frac{2n}{(n+1)\Lambda}}+2\varepsilon.
\end{equation}
Let $u$ be the positive principal eigenfunction associated to $\Lambda$, so that
\begin{equation}\label{ofoiqnoihq}
    \left(-\Delta+\frac{1}{2}R\right) u=\Lambda u\quad\text{ in $M^n$,}\quad\quad\quad\quad  u=0\quad\text{ on $\partial M^n$}.
\end{equation}
In order to obtain a band on which $u$ has a uniform positive lower bound, we may push in by a small amount from the boundary and consider
\begin{equation}
\check {M}^n=\left\{x\in M^n \mid d(x,\partial M^n)\geq\frac{\varepsilon}{2}\right\}.
\end{equation}
Note that it may be assumed without loss of generality that $\varepsilon$ is sufficiently small to guarantee that $\partial \check M^n$ is smooth and is divided into classes $\partial_{\pm}\check M^n$ corresponding with $\partial_{\pm} M^n$. 
Next denote $r_\pm(x)=d(x,\partial_{\pm}\check{M}^n)$, and for $0<\varepsilon_0<<\varepsilon$ define a potential function on the interior of $\check{M}^n$ by
\begin{align}
f_0(x)=
\begin{cases}
-\sqrt{\frac{2n\Lambda}{n+1}}\cot\left[\left(\sqrt{\frac{(n+1)\Lambda}{2n}}-\varepsilon_0\right)r_-(x)\right] & 0<r_-(x)\le\frac{\pi}{2} \left(\sqrt{\frac{(n+1)\Lambda}{2n}}-\varepsilon_0\right)^{-1}\\  
\sqrt{\frac{2n\Lambda}{n+1}}\cot\left[\left(\sqrt{\frac{(n+1)\Lambda}{2n}}-\varepsilon_0\right)r_+(x)\right]
& 0<r_+(x)\le \frac{\pi}{2} \left(\sqrt{\frac{(n+1)\Lambda}{2n}}-\varepsilon_0\right)^{-1}\\ 
0 &\text{elsewhere}
\end{cases}.
\end{align} 
Observe that $f_0$ is Lipschitz and limits to $\pm\infty$ on $\partial_{\pm}\check{M}^n$.
Let $B_{\sigma}(x)$ be the geodesic ball of radius $\sigma$ centered at an interior point $x\in\check{M}^n$, and set
\begin{equation}
    L_{f_0}(x)=\limsup_{\sigma\to 0} \text{Lip}_{B_\sigma(x)}(f_0).
\end{equation}
Then at all such points the following inequality holds
\begin{equation}\label{aoihfoihnah}
\frac{n+1}{2n}f_0^2-L_{f_0}+\Lambda\ge \sqrt{\frac{2n\Lambda}{n+1}}\varepsilon_0.
\end{equation}
This strictly positive lower bound for the left-hand side of \eqref{aoihfoihnah} is the impetus for introducing the constant $\varepsilon_0$.

We now seek to replace $f_0$ with a smooth approximation that agrees with it near $\partial\check{M}^n$. Let
$\check{M}^n_{r_0}=\{x\in \check{M}^n \mid r_\pm(x)\geq r_0\}$, where $r_0 >0$ is chosen sufficiently small so that $r_\pm$ are smooth (and hence $f_0$ is smooth) within $\check{M}^n\setminus\check{M}^n_{r_0}$. 
We may approximate $f_0$ by $f_{\delta}\in C^{\infty}(\check{M}^n_{r_0/2})$ which, for each small $\delta>0$, satisfies
\begin{equation}\label{oirhoqinohq}
  |f_0(x)-f_\delta(x)|\le \delta, \quad\quad \quad |\nabla f_\delta(x)|\le \mathrm{Lip}_{B_{\delta}(x)}(f_0)+\delta,
\end{equation}
with $x\in \check{M}^n_{r_0/2}$. Such an approximation $f_\delta$ may be constructed as in \cite[Theorem 2.2]{CRi}.
Furthermore given $\delta'>0$, the property \eqref{oirhoqinohq} implies that $L_{f_{\delta}}\leq L_{f_0} +\delta'$ for all $\delta$ sufficiently small, and therefore
we find that \eqref{aoihfoihnah} yields
\begin{equation}\label{aownfoiqnoihnq}
\frac{n+1}{2n}f_\delta^2-L_{f_\delta}+\Lambda\ge \sqrt{\frac{2n\Lambda}{n+1}}\varepsilon_0-2\delta'
\end{equation}
on $\check{M}^n_{r_0/2}$. Now let $\eta$ be a smooth nonnegative cut-off function on $\check{M}^n$ which is 1 on $\check{M}^n_{r_0}$ and zero on $\check{M}^n \setminus \check{M}^n_{r_0/2}$.
Define $f=\eta f_\delta+(1-\eta) f_0$ and observe that this function is smooth on $\check{M}^n$, agrees with $f_0$ on $\check{M}^n\setminus\check{M}^n_{r_0/2}$, and by virtue of \eqref{aoihfoihnah} and \eqref{aownfoiqnoihnq} it satisfies
\begin{equation}\label{80}
\frac{n+1}{2n}f^2-|\nabla f|+\Lambda\ge \sqrt{\frac{2n\Lambda}{n+1}}\varepsilon_0-3\delta'>0
\end{equation}
on $\check{M}^n$, if $\delta$ is sufficiently small and $\delta'<<\varepsilon_0$.

The next step is to introduce the warped $\mu$-bubbles which serve as the central geometric tool in this proof. We will closely follow the exposition developed in \cite[Section 3]{ChodoshLi} and \cite[Proposition 2.1]{Zhu}. Fix a Caccioppoli set $\Omega_0$ having smooth boundary with $\partial_+ \check{M}^n \subset \Omega_0$, and such that $\partial\Omega_0 \setminus\partial_+\check{M}^n$ lies within the interior of $\check{M}^n$. For instance, one may take $\Omega_0$ to be an appropriate sublevel set of the distance function $r_+$. For any Caccioppoli set $\Omega\subset \check M^n$ with symmetric difference $\Omega\Delta\Omega_0$ compactly contained within the interior of $\check{M}^n$, define the functional
\begin{equation}\label{mu bubble functional}
    \mathcal A_{u,f}(\Omega)=\int_{\partial^\ast\Omega}ud\mathcal{H}^{n-1}-\int_{\check{M}^n}(\chi_\Omega-\chi_{\Omega_0})fud\mathcal{H}^n
\end{equation}
where $\partial^\ast \Omega$ denotes the reduced boundary, $\chi_{\Omega}$ is the characteristic function of $\Omega$, and $d\mathcal{H}^n$ is the $n$-dimensional Hausdorff measure. Using that $f$ blows-up at $\partial_{\pm}\check{M}^n$, it may be shown that a minimizer $\check{\Omega}$ of $\mathcal{A}_{u,f}$ exists within this class of sets, and since $n\le7$ its boundary $\partial\check{\Omega}$ is smooth.
Using the fact that $\check{\Omega}\Delta\Omega_0$ does not intersect $\partial\check M^n$, we find that $\partial_+\check M^n$ lies within $\check{\Omega}$, and hence the smooth hypersurface $\Sigma^{n-1}:=\partial\check{\Omega}\setminus\partial_+\check M^n$ must separate $\partial_-\check M^n$ from $\partial_+\check M^n$. This surface is referred to as a warped $\mu$-bubble, see Figure \ref{fig2}.

\begin{remark}
Instead of approximating $f_0$ by a smooth function $f$ and citing the existence theory for $\mu$-bubbles in the smooth setting as done above, one may directly construct $C^{2,\varsigma}$-regular $\mu$-bubbles with respect to the Lipschitz potential function $f_0$. This existence result is carried out in Appendix \ref{appA}, which may be of independent interest.
\end{remark}

\begin{figure}[H]
\includegraphics[totalheight=4cm]{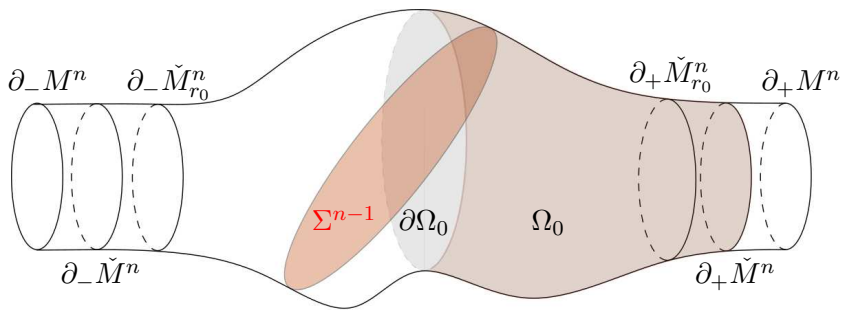}
\put(-305,83){$\partial_- M^n$}
\put(-260,83){$\partial_- \check M^n_{r_0}$}
\put(-285,10){$\partial_- \check M^n$}
\put(-20,83){$\partial_+ M^n$}
\put(-70,86){$\partial_+ \check M^n_{r_0}$}
\put(-45,10){$\partial_+ \check M^n$}
\put(-157,30){$\partial \Omega_0$}
\put(-107,30){$ \Omega_0$}
\put(-190,30){\textcolor{red}{$\Sigma^{n-1}$}}
\caption{
The relevant regions in the $\mu$-bubble approach.
}\label{fig2}
\end{figure}


A direct computation yields the first variation formula for the $\mu$-bubble
\begin{equation}\label{qohoqinohinqpoh3n}
    Hu-fu+\langle \nabla u,\nu\rangle=0,
\end{equation}
where $\nu$ is the unit outer normal to $\Sigma^{n-1}$, and $H$ is the mean curvature of $\Sigma^{n-1}$ with respect to $\nu$. Moreover, the second variation with test function $\phi\in C^{\infty}(\Sigma^{n-1})$ produces
\begin{align}
\begin{split}
    0\le &\int_{\Sigma^{n-1}}\left(-u\phi\Delta_\Sigma \phi-|A|^2\phi^2u-\Ric(\nu,\nu)\phi^2u+H\langle \nabla u,\nu\rangle \phi^2\right)dA\\
    &+\int_{\Sigma^{n-1}}\left(-f\phi^2\langle \nabla u,\nu\rangle-\phi^2u\langle \nabla f,\nu\rangle+\phi^2\nabla_{\nu\nu}u -\phi\langle \nabla_\Sigma  u,\nabla_\Sigma \phi   \rangle         \right)dA,
    \end{split}
\end{align}
where $A$ denotes the second fundamental form.
Utilizing the Gauss equations, the basic inequality $|A|^2\ge\frac1{n-1}H^2$, and the decomposition $\Delta u=\nabla_{\nu\nu}u+H\langle\nabla u,\nu\rangle+\Delta_\Sigma u$ gives rise to
\begin{align}
\begin{split}
    0\le &\int_{\Sigma^{n-1}}\left(-u\phi\Delta_\Sigma \phi-\frac{n}{2(n-1)}H^2\phi^2u-\frac12R\phi^2u+\frac12R_\Sigma\phi^2u\right)dA\\
    &+\int_{\Sigma^{n-1}}\left(-f\phi^2\langle \nabla u,\nu\rangle-\phi^2u\langle \nabla f,\nu\rangle+\phi^2(\Delta u-\Delta_\Sigma u) -\phi\langle \nabla_\Sigma u,\nabla_\Sigma \phi   \rangle          \right)dA,
    \end{split}
\end{align}
with $R_{\Sigma}$ denoting the scalar curvature of $\Sigma^{n-1}$. Equations \eqref{ofoiqnoihq} and \eqref{qohoqinohinqpoh3n}, along with the Cauchy-Schwarz inequality $|\langle\nabla f,\nu\rangle|\leq|\nabla f|$, then imply
\begin{align}\label{e:stab2}
\begin{split}
    0\le &\int_{\Sigma^{n-1}}\left(-u\phi\Delta_\Sigma \phi-\frac{n}{2(n-1)}(f-\langle \nabla \log u,\nu\rangle)^2\phi^2u-\Lambda\phi^2u+\frac12R_\Sigma\phi^2u\right)dA\\
    &+\int_{\Sigma^{n-1}}\left(-f\phi^2\langle \nabla u,\nu\rangle-\phi^2u\langle \nabla f,\nu\rangle-\phi^2\Delta_\Sigma u -\phi\langle \nabla_\Sigma u,\nabla_\Sigma \phi   \rangle          \right)dA\\
    \le &\int_{\Sigma^{n-1}}\left(-u\phi\Delta_\Sigma \phi-\frac{n}{2(n-1)}f^2\phi^2u-\frac{n}{2(n-1)}\langle \nabla \log u,\nu\rangle^2u\phi^2-\Lambda\phi^2u+\frac12R_\Sigma\phi^2u\right)dA\\
    &+\int_{\Sigma^{n-1}}\left(\frac1{n-1}u\phi^2f\langle \nabla \log u,\nu\rangle   +\phi^2u|\nabla f|-\phi^2\Delta_\Sigma u -\phi\langle \nabla_\Sigma u,\nabla_\Sigma \phi   \rangle          \right)dA.
    \end{split}
\end{align}
Observe that by Young's inequality
\begin{equation}
\frac{1}{n-1}u\phi^2f\langle\nabla \log u,\nu\rangle\leq  \frac{n}{2(n-1)}u\phi^2\langle\nabla\log u,\nu\rangle^2+\frac{1}{2n(n-1)} u\phi^2f^2,
\end{equation}
the therefore \eqref{e:stab2} becomes
\begin{align}\label{e:stab3}
\begin{split}
    0\le &\int_{\Sigma^{n-1}}\left(-u\phi\Delta_\Sigma \phi-\frac{n+1}{2n}f^2\phi^2u-\Lambda\phi^2u+\frac12R_\Sigma\phi^2u\right)dA\\
    &+\int_{\Sigma^{n-1}}\left(\phi^2u|\nabla f|-\phi^2\Delta_\Sigma u -\phi\langle \nabla_\Sigma u,\nabla_\Sigma \phi   \rangle          \right)dA.
    \end{split}
\end{align}
Next, let $\psi\in C^{\infty}(\Sigma^{n-1})$ and set $\phi=\psi u^{-\frac12}$ to find
\begin{align}
\begin{split}
    0   \le &\int_{\Sigma^{n-1}}\left(-u^{\frac12}\psi\Delta_\Sigma (\psi u^{-\frac12})-\psi^2u^{-1}\Delta_\Sigma u -u^{-\frac12}\psi\langle \nabla_\Sigma u,\nabla_\Sigma (\psi u^{-\frac12})  \rangle  \right)dA\\
    &+\int_{\Sigma^{n-1}}\left(|\nabla f| -\frac{n+1}{2n}f^2-\Lambda+\frac12R^\Sigma       \right)\psi^2dA.
    \end{split}
\end{align}
Finally if $n\geq 3$, integrating by parts, using Young's inequality again, and applying \eqref{80} produces
\begin{align}\label{pqjfoiqahnoihq}
\begin{split}
    0   \le &\int_{\Sigma^{n-1}}\left(|\nabla_\Sigma \psi|^2-\frac34\psi^2|\nabla_\Sigma \log u|^2+\psi \langle \nabla_\Sigma\psi,\nabla_\Sigma \log u\rangle \right)dA\\
    &+\int_{\Sigma^{n-1}}\left(|\nabla f| -\frac{n+1}{2n}f^2-\Lambda+\frac12R_\Sigma       \right)\psi^2dA\\
    \le&\int_{\Sigma^{n-1}}\left(\frac43|\nabla_\Sigma \psi|^2+\frac12R_\Sigma \psi^2\right)dA
    +\int_{\Sigma^{n-1}}\left(|\nabla f| -\frac{n+1}{2n}f^2-\Lambda      \right)\psi^2dA\\
    \le&\frac{2(n-1)}{n-2}\int_{\Sigma^{n-1}}\left(|\nabla_\Sigma \psi|^2+\frac{n-2}{4(n-1)}R_\Sigma \psi^2\right)dA
    +\int_{\Sigma^{n-1}}\left(|\nabla f| -\frac{n+1}{2n}f^2-\Lambda      \right)\psi^2dA\\
   <&\frac{2(n-1)}{n-2}\int_{\Sigma^{n-1}}\left(|\nabla_\Sigma \psi|^2+\frac{n-2}{4(n-1)}R_\Sigma \psi^2\right)dA,
    \end{split}
\end{align}
if $\psi$ is not identically zero.
Since $\psi$ was arbitrary, it follows that the principle eigenvalue of the conformal Laplacian of $(\Sigma^{n-1},g)$ is positive. In particular, $\Sigma^{n-1}$ admits a metric of positive scalar curvature. When $n=2$, the third line of \eqref{pqjfoiqahnoihq} is still valid, so that choosing $\psi=1$ yields the same conclusion. On the other hand, 
since $\Sigma^{n-1}$ separates $\partial_- \check{M}^n$ and $\partial_+ \check{M}^n$ and hence also $\partial_- M^n$ and $\partial_+ M^n$, the fact that $(M^n,\partial_\pm M^n)$ is overtorical implies that $\Sigma^{n-1}$ admits a nonzero degree map to $T^{n-1}$, see \cite[Lemma 6.2]{Rade1}. Since $n\leq7$, classical work of Schoen-Yau \cite{SY79} shows that $\Sigma^{n-1}$ cannot support positive scalar curvature metrics. From this contradiction we conclude that the desired inequality \eqref{aohfoiaopijhpoq} is valid.

\begin{remark}
Note that this last argument shows that overtorical bands are nonPSC-bands for $n\leq 8$. Indeed, \cite{SY79} continues to apply for this slightly extended range of dimensions.
\end{remark}


\section{The Spectral Cube Inequality}
\label{sec5}

In this section we will establish Theorem \ref{spectral cube}.
Let $u$ be the positive principal Dirichlet eigenfunction for the Riemannian cube, so that
\begin{equation}
\left(-\Delta +\frac12R\right)u=\Lambda_{\frac{1}{2}} u \quad\quad\text{ in }[-1,1]^n,\quad\quad\quad\quad u=0 \quad\quad\text{ on }\partial[-1,1]^n.
\end{equation}
Let $l,\varepsilon>0$ be parameters, and consider the higher dimensional cube $\tilde{M}^{n+1}_{l,\varepsilon}=[-l,l]\times[-1+\varepsilon,1-\varepsilon]^n$ with warped product metric $\tilde{g}=u^2dt^2+g$. Note that $u$ does not vanish on $\tilde{M}^{n+1}_{l,\varepsilon}$. Furthermore, observe that the scalar curvature \cite[(13a) page 214]{Neill} of $\tilde{g}$ satisfies
\begin{equation}
    \tilde{R}=-2u^{-1}\left(\Delta u-\frac12 Ru\right)=2\Lambda_{\frac{1}{2}} >0.
\end{equation}
By applying the pointwise version of Gromov's cube inequality \cite[Theorem 1.1]{WXY}, \cite[Section 3.8]{Gromov2} we then have
\begin{equation}
\sum_{i=0}^{n}\frac1{\ell_{i,l,\varepsilon}^2}\geq \frac{\Lambda_{\frac{1}{2}} (n+1)}{2\pi^2n},
\end{equation}
where $\ell_{i,l,\varepsilon}$ is the distance within $\tilde{M}^{n+1}_{l,\varepsilon}$ between the $i$th opposing faces of the cube, with $i=0$ corresponding to the $t$-direction. Moreover, since $\ell_{i,l,\varepsilon}$ is independent of $l$ for $i=1,\ldots,n$, and $\ell_{0,l,\varepsilon}\rightarrow\infty$ as $l\rightarrow\infty$, it follows that by passing to the limit
\begin{equation}
\sum_{i=1}^{n}\frac1{\ell_{i,\varepsilon}^2}\geq \frac{\Lambda_{\frac{1}{2}} (n+1)}{2\pi^2n},
\end{equation}
where $\ell_{i,\varepsilon}$ is the distance within $([-1+\varepsilon,1-\varepsilon]^n ,g)$ between the $i$th opposing faces of the cube. Finally, since $\ell_{i,\varepsilon}\rightarrow \ell_{i}$ as $\varepsilon\rightarrow 0$, the desired inequality is achieved.

\begin{remark}
In a similar fashion, this method also allows one to derive the spectral toric band inequality directly from the pointwise toric band inequality. Note however, that this warped product approach cannot deal with $c$-spectral constants for $c\neq \frac{1}{2}$, and it does not address the case of equality.
\end{remark}

\section{Black Hole Existence}
\label{sec6}

In this section Theorem \ref{bhexistence} will be established, comparison with the Schoen-Yau black hole existence result \cite{SY} will be discussed, and examples will be presented. The main steps in the proof of the existence of apparent horizons will follow the prescription of \cite{SY}, and thus here only an outline will be given with remarks provided to accommodate the higher dimensions and different radii. 

\subsection{Proof of Theorem \ref{bhexistence}}
Consider an initial data set $(M^n,g,k)$ as in the statement of the theorem, and assume by way of contradiction that it does not contain any closed properly embedded smooth apparent horizons.
Then there exists a regular solution to the Dirichlet problem for the Jang equation
\begin{equation}\label{qoiroiqnoihn}
\left(g^{ij}-\frac{f^{i}f^{j}}{1+|\nabla f|^{2}}\right)\left(\frac{\nabla_{ij}f}
{\sqrt{1+|\nabla f|^{2}}}-k_{ij}\right)=0\quad\text{ on }M^n,\quad\quad\quad f=0\quad\text{ on }\partial M^n,
\end{equation}
where $f^{i}=g^{ij}\partial_{j}f$ and $\nabla_{ij}f$ denotes the covariant Hessian. The existence is obtained from a limit
of solutions to the capillarity regularized equation, utilized by Schoen-Yau in the proof of the positive mass theorem \cite{SY2} in dimension 3. This was extended to dimensions $n\leq 7$ by Eichmair \cite[Proposition 7]{Eichmair2} in the asymptotically flat setting, using the theory of $C$-almost minimizing boundaries \cite[Appendix A]{Eichmair1}. The necessary tool needed to apply Eichmair's strategy to the Dirichlet problem \eqref{qoiroiqnoihn} is a 2-sided barrier construction at the boundary $\partial M^n$. This is explained for dimension 3 in \cite[page 11]{Yau}, and the same construction holds essentially without change in higher dimensions as long as the boundary is untrapped. Because the solution of Jang's equation represents a MOTS in $n+1$ dimensions, one might expect its singular set to be at best codimension 7. However, better regularity properties prevail as it is a graph \cite[Remark 4.1, pages 568-569]{Eichmair1}, leaving its singular set to be at least codimension 8.

Consider now the Jang metric $\bar{g}=g+df^2$ on $M^n$. Its scalar curvature \cite[(10)]{Eichmair2} satisfies the identity
\begin{equation}\label{oqrt0oihnqoibhoqhb}
\bar{R}=2(\mu-J(v))+
|A-k|_{\bar{g}}^{2}+2|X|_{\bar{g}}^{2}
-2\mathrm{div}_{\bar{g}}(X),
\end{equation}
where $A$ is the second fundamental form of the graph $t=f(x)$ in the product manifold $(M^n \times\mathbb{R},g+dt^2)$, $\mathrm{div}_{\bar{g}}$
is the divergence operator with respect to $\bar{g}$, and $v$ and
$X$ are 1-forms given by
\begin{equation}
v_{i}=\frac{f_{i}}{\sqrt{1+|\nabla f|^{2}}},\quad\text{
}\text{ }\text{ }\text{ }
X_{i}=\frac{f^{j}}{\sqrt{1+|\nabla f|^{2}}}(A_{ij}-k_{ij}).
\end{equation}
Let $u>0$ be the principal Dirichlet eigenfunction of $-\Delta_{\bar{g}}+\frac{1}{2}\bar{R}$ on $\Omega$. Then
multiplying \eqref{oqrt0oihnqoibhoqhb} by $u^2$ and integrating by parts produces
\begin{equation}\label{qohfoihnqoiqh}
\int_{\Omega}\left((\mu-|J|)+\frac{1}{2}|A-k|_{\bar{g}}^2+|X+\nabla \log u|_{\bar{g}}^2\right)u^2 dV_{\bar{g}}
\leq \int_{\Omega}\left(|\nabla u|^2_{\bar{g}}+\frac{1}{2}\bar{R}u^2\right)dV_{\bar{g}}.
\end{equation}
Notice that it is not possible for both $|A-k|_{\bar{g}}$ and $|X+\nabla \log u|_{\bar{g}}$ to vanish on $\Omega$,
otherwise this would imply that $u$ is constant. Therefore the integral involving these two terms gives a strictly
positive contribution to the left-hand side. Using that $\mu-|J|\geq\Lambda$ on $\Omega$, we conclude that the corresponding
principal eigenvalue satisfies $\bar{\Lambda}\geq (1+\varepsilon)\Lambda$ for some $\varepsilon>0$ sufficiently small.

If $N^n \hookrightarrow (\Omega,\bar{g})$ is an isometrically immersed nonPSC-band or cube, then by utilizing the pullback of $u$ on $N^n$ in the proofs of 
Theorems \ref{T:mu bubble} and \ref{spectral cube}, we find that the band or cubical-width satisfies
\begin{equation}
\text{$\bar{g}$-width}\leq \pi\sqrt{\frac{2n}{(n+1)\bar{\Lambda}}}\leq \pi\sqrt{\frac{2n}{(n+1)(\varepsilon+1)\Lambda}}.
\end{equation}
It then follows from the definition of torical and cubical-radius, that
\begin{equation}\label{eqh0h09h09yqh}
\overline{\mathrm{Rad}}(\Omega)\leq \pi\sqrt{\frac{2n}{(n+1)(\varepsilon+1)\Lambda}}<\pi\sqrt{\frac{2n}{(n+1)\Lambda}},
\end{equation}
where $\overline{\mathrm{Rad}}$ denotes the radius with respect to the Jang metric. Furthermore, since $\bar{g}$ is larger than $g$ we have $\mathrm{Rad}(\Omega)\leq\overline{\mathrm{Rad}}(\Omega)$. However, this combined with \eqref{eqh0h09h09yqh} leads to a contradiction with the assumption \eqref{q3j0qhp0hjpoihjnaoi}. We conclude that $M^n$ must contain a closed properly embedded smooth apparent horizon $\mathcal{S}^{n-1}$.

Lastly, to verify the last claims of Theorem \ref{bhexistence}, note that the apparent horizon may be identified via blow-up of the Jang equation. Moreover, the same manipulations that give rise to \eqref{qohfoihnqoiqh}, provide an analogous stability type inequality on the Jang surface where $u$ is replaced by smooth functions with compact support. With standard arguments, as in \cite[Proposition 9]{Eichmair2}, this stability property is inherited by the apparent horizon. Thus, if $\mu-|J|\geq\lambda>0$ on $\mathcal{S}^{n-1}$, then the principal eigenvalue of $-\Delta_{\mathcal{S}^{n-1}}+\frac{1}{2}R_{\mathcal{S}^{n-1}}$ is not less than $\lambda$. By Theorems \ref{T:mu bubble} and \ref{spectral cube}, it follows that 
\begin{equation}
\mathrm{Rad}(\mathcal{S}^{n-1})\leq \pi\sqrt{\frac{2(n-1)}{n\lambda}}.
\end{equation}
Moreover, if $c_n =\frac{n-2}{4(n-1)}$ is the dimensional constant from the conformal Laplacian, then since $2\leq c_n^{-1}$ the same arguments show that the principal eigenvalue of $-\Delta_{\mathcal{S}^{n-1}}+c_n R_{\mathcal{S}^{n-1}}$ is positive. Hence $\mathcal{S}^{n-1}$ is of positive Yamabe type.

\subsection{Comparison to Schoen-Yau result}

As discussed in the introduction, the 3-dimensional Schoen-Yau black hole existence result in \cite{SY} relies on a different notion of radius than those used in this article. In order to compare Theorem \ref{bhexistence} with their result, we will in this subsection compare the torical-radius $\mathrm{Rad}_t$ and the Schoen-Yau radius $\mathrm{Rad}_{sy}$. A preliminary observation reveals that the neighborhoods used to build the Schoen-Yau radius are related to nonPSC-bands.

\begin{proposition}\label{aoifnoiqnophiqpojh}
Let $(\Omega^3,g)$ be a compact Riemannian 3-manifold with (possibly empty) boundary $\partial\Omega^3$, and assume that $\Gamma\subset\mathring{\Omega}^3$ is a smooth simple closed curve which bounds a disc $D\subset\Omega^3$. If $\Gamma$ does not bound a disc within the distance neighborhood $N_r=\{x\in\Omega^3 \mid d(x,\Gamma)< r\}$ and $N_r\cap\partial\Omega^3=\emptyset$, then any embedded hypersurface in $N_r$ separating $\Gamma$ from $\partial N_r$ must have a component of nonzero genus. 
\end{proposition}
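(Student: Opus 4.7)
The plan is to argue by contradiction: suppose every component of the separating surface $\Sigma\subset N_r$ is a $2$-sphere, and use this to produce an embedded disc in $N_r$ with boundary $\Gamma$. The strategy combines the classical innermost-disc surgery along a sphere with a topological observation relating the components of $\Omega^3\setminus\Sigma$ to the neighborhood $N_r$.

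First I would isotope the given disc $D$ bounded by $\Gamma$ so that it meets $\Sigma$ transversally. Since $\Gamma\cap\Sigma=\emptyset$, the intersection $D\cap\Sigma$ is a finite disjoint collection of embedded circles contained in $\mathring D$. Then, iteratively, pick a circle $c\subset D\cap\Sigma$ that is innermost on its sphere component $S\subset\Sigma$, so that $c$ bounds a disc $\Delta\subset S$ with $\mathring\Delta\cap D=\emptyset$; the curve $c$ also bounds a sub-disc $D_c\subset D$. Replace $D_c$ by a small normal push-off $\Delta'$ of $\Delta$ into the side of $S$ containing $D_c$. This yields an embedded disc bounded by $\Gamma$ with strictly fewer intersection circles with $\Sigma$. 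After finitely many such steps, one obtains an embedded disc $\tilde D\subset\Omega^3$ with $\partial\tilde D=\Gamma$ and $\tilde D\cap\Sigma=\emptyset$.

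The final step is to show that $\tilde D$ actually lies inside $N_r$. Let $V$ be the connected component of $\Omega^3\setminus\Sigma$ that contains $\Gamma$; I claim $V\subset N_r$. Indeed, if a path in $V$ beginning on $\Gamma$ were to leave $N_r$, one could truncate it at its first intersection with $\partial N_r$, producing a path in $N_r\setminus\Sigma$ from $\Gamma$ to $\partial N_r$, which contradicts the separation hypothesis. Since $\tilde D$ is connected, disjoint from $\Sigma$, and contains $\Gamma\subset V$, it follows that $\tilde D\subset V\subset N_r$. This contradicts the hypothesis that $\Gamma$ does not bound a disc in $N_r$, so some component of $\Sigma$ must have nonzero genus.

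The main technical point I expect to need care is the preservation of embeddedness under surgery: the push-off $\Delta'$ must be chosen small enough to remain disjoint from $D\setminus D_c$ and from $\Sigma\setminus S$. The innermost choice of $c$ is precisely what enables this, since it forces $\Delta\cap D=c$, so a sufficiently small normal push-off of $\Delta$ is disjoint from $D$; disjointness from the remaining sphere components is automatic for small push-offs because the components of $\Sigma$ are pairwise disjoint. With that verified, the rest of the argument is topological bookkeeping.
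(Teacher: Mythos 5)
Your proof is correct, and it takes a genuinely different route from the paper's. The paper works inside the distance neighborhood from the start: it takes $N'_r$ to be the component of $N_r\setminus\Sigma^2$ containing $\Gamma$, lets $D'$ be the component of $D\cap N'_r$ containing $\Gamma$, and caps off the circles of $\partial D'\setminus\Gamma$ directly with subdiscs of the sphere components, so containment in $N_r$ is automatic by construction; the price is that nested intersection circles on a single sphere force the caps to overlap, so this produces a possibly singular disc with boundary $\Gamma$ rather than an embedded one. You instead run the standard innermost-circle surgery on $D$ inside all of $\Omega^3$ to produce an embedded disc $\tilde D$ disjoint from $\Sigma$, and then argue separately, via the connectedness of $\tilde D$ and the observation that the component $V$ of $\Omega^3\setminus\Sigma$ containing $\Gamma$ is trapped inside $N_r$ by the separation hypothesis, that $\tilde D$ lands in $N_r$. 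Your approach yields the stronger conclusion of an embedded disc and handles the combinatorics of nested circles cleanly at the cost of the extra surgery bookkeeping, while the paper's is shorter but produces only a singular disc (which still suffices for the Schoen--Yau radius, interpreted as $\Gamma$ being null-homotopic in $N_r$). Your final connectedness argument is the correct analogue of the paper's use of $N'_r$: the intermediate value theorem for $d(\cdot,\Gamma)$ along any path in $V$ exiting $N_r$ produces a path in $\bar N_r\setminus\Sigma$ from $\Gamma$ to $\partial N_r$, contradicting separation.
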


\begin{proof}
Suppose that $N_r\subset \Omega^3$ is a distance neighborhood of the curve $\Gamma$, such that there is no disc in $N_r$ bounded by $\Gamma$ and $N_r\cap\partial\Omega^3 =\emptyset$. Note that $\partial N_r\neq\emptyset$, otherwise $N_r=\Omega^3$ and $D$ would lie within $N_r$. Proceeding by contradiction, let us suppose that there is an embedded hypersurface $\Sigma^2 \hookrightarrow N_r$ which separates $\Gamma$ from $\partial N_r$, and has the property that each of its components is a 
2-sphere. Let $N'_r$ denote the component of $N_r\setminus \Sigma^2$ that contains $\Gamma$, and note that its boundary consists of spheres. We may assume without loss of generality that $D$ intersects $\Sigma^2$ transversely, and will denote by $D'$ the component of $D\cap N'_r$ which contains $\Gamma$. Then $\partial D' \setminus \Gamma$ consists of a finite number of circles within $\Sigma^2$. Since each component of $\Sigma^2$ is a sphere, these circles bound discs within $\Sigma^2$ which may be used to cap off $D'$. Thus, the union of $D'$ with these caps produces a disc that lies within $N_r$ and is bounded by $\Gamma$, yielding a contradiction.  
We conclude that $\Sigma^2$ must contain at least one component of nonzero genus.
\end{proof}

The main observation gives the desired relation between the two notions of radii. 
In particular, this comparison implies that Theorem \ref{bhexistence} recovers \cite[Theorem 2]{SY} in dimension 3. 

\begin{lemma}
Let $(\Omega^3,g)$ be a compact Riemannian 3-manifold. Then $\mathrm{Rad}_t(\Omega^3)\geq \mathrm{Rad}_{sy}(\Omega^3)$.
\end{lemma}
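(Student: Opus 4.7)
The plan is to convert each Schoen-Yau configuration $(\Gamma, \mathbf{r})$ achieving (nearly) the Schoen-Yau radius into a nonPSC-band isometrically embedded in $\Omega^3$ whose intrinsic width approaches $\mathbf{r}$; Proposition~\ref{aoifnoiqnophiqpojh} supplies precisely the topological obstruction required. Specifically, fix $0 < \mathbf{r} < \mathrm{Rad}_{sy}(\Omega^3)$ together with a smooth simple closed curve $\Gamma \subset \mathring{\Omega}^3$ bounding a disc, such that $N_{\mathbf{r}}\cap\partial\Omega^3=\emptyset$ and $\Gamma$ bounds no disc in $N_{\mathbf{r}}$. For small $\epsilon,\delta>0$, I would take a smooth tubular neighborhood $T_\epsilon$ of $\Gamma$ (so $\partial T_\epsilon\cong T^2$) and a regular level $\Sigma_s=\{d(\cdot,\Gamma)=s\}$ with $s\in(\mathbf{r}-2\delta,\mathbf{r}-\delta)$, obtained via Sard after a slight smoothing of the distance function if necessary, and define
\begin{equation*}
N := \overline{\{d(\cdot,\Gamma)<s\}} \setminus T_\epsilon,\qquad \partial_-N=\partial T_\epsilon,\qquad \partial_+N=\Sigma_s.
\end{equation*}
For $\epsilon,\delta$ small enough, $N$ will be a smooth connected compact band isometrically embedded in $\Omega^3$ with torus inner boundary.

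Next I would verify that $N$ is a nonPSC-band. Given any smooth embedded hypersurface $\Sigma\subset N$ separating $\partial_-N$ from $\partial_+N$, re-attaching $T_\epsilon$ on the inside and the outer shell $N_{\mathbf{r}}\setminus\overline{\{d<s\}}$ on the outside shows that $\Sigma$ also separates $\Gamma$ from $\partial N_{\mathbf{r}}$ inside $N_{\mathbf{r}}$. Proposition~\ref{aoifnoiqnophiqpojh} then forces $\Sigma$ to contain a component of positive genus, which by Gauss-Bonnet admits no metric of positive scalar curvature; hence $\Sigma$ itself does not. This confirms that $N$ meets the definition of a nonPSC-band.

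For the width I expect the identity $d_N(\partial_-N,\partial_+N)=s-\epsilon$. The lower bound follows at once from the ambient triangle inequality applied to $d(\cdot,\Gamma)$, since any two boundary points have distances $\epsilon$ and $s$ to $\Gamma$. For the upper bound, the minimizing geodesic from any $y\in\partial_+N$ to $\Gamma$ has length $s$ and the distance to $\Gamma$ decreases monotonically along it, so the geodesic crosses $\partial_-N$ at arc-length $s-\epsilon$ with the traversed segment remaining inside $N$. Taking $\epsilon,\delta\to 0$ and then letting $\mathbf{r}\to\mathrm{Rad}_{sy}(\Omega^3)$ gives $\mathrm{Rad}_t(\Omega^3)\geq \mathrm{Rad}_{sy}(\Omega^3)$.

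The principal obstacle I anticipate is purely technical: ensuring the construction yields a genuinely smooth band, since $d(\cdot,\Gamma)$ is only Lipschitz. I would handle this either by applying Sard's theorem to a smooth approximation $\tilde{\rho}$ of the distance function with $|\tilde\rho-d(\cdot,\Gamma)|$ arbitrarily small, or by invoking the clean tubular neighborhood structure of the smoothly embedded submanifold $\Gamma$ for the inner boundary and a generic level set smoothing for the outer boundary. Once smoothness and connectedness of $N$ are in hand, the topological content is furnished entirely by Proposition~\ref{aoifnoiqnophiqpojh} and the remaining arguments are direct.
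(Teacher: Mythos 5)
Your proof is correct and follows essentially the same strategy as the paper: pass from a near-optimal Schoen--Yau configuration $(\Gamma,\mathbf{r})$ to the annular distance region between a small tube around $\Gamma$ and a nearby level set, invoke Proposition~\ref{aoifnoiqnophiqpojh} to see that any separating hypersurface in this annulus has a positive-genus component (hence the annulus is a nonPSC-band), compute that the band width is $s-\epsilon$, and take limits. The paper smooths the outer boundary by running mean curvature flow briefly rather than by selecting a regular level of a smoothed distance function, and keeps the inner boundary as the $\varepsilon$-distance tube (which for small $\varepsilon$ is exactly the smooth tubular neighborhood you use), but these are cosmetic differences in the same argument.
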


\begin{proof}
Let $\Gamma\subset \mathring{\Omega}^3$ be a smooth simple closed curve. According to a version of Sard's theorem \cite{Rifford}, the set of critical values for the distance function from $\Gamma$ is of measure zero, and thus when computing the Schoen-Yau radius it suffices to restrict attention to regular values. Consider such a regular value $r$, then $\partial N_r$ is a Lispchitz hypersurface \cite{Rifford}, and may therefore be approximated with a smooth hypersurface that is homologous and arbitrarily close to $\partial N_r$ by, for instance, running mean curvature flow for a short time \cite{EckerHuisken}. In particular, we may assume without loss of generality that $N_r$ possesses a smooth boundary. Let $\varepsilon>0$ and consider the annular distance neighborhood $N_r \setminus N_{\varepsilon}$. By Proposition \ref{aoifnoiqnophiqpojh}, for all sufficiently small $\varepsilon$ this annular distance neighborhood defines a nonPSC-band of width $r-\varepsilon$. If $\mathbf{r}$ is the supremum of
values $r$ with the property that the $r$-distance neighborhood from $\Gamma$ does not intersect $\partial\Omega^3$, and $\Gamma$ does not bound a disc in this neighborhood, then since $\varepsilon$ may be taken arbitrarily small we have
$\mathbf{r}\leq\mathrm{Rad}_t(\Omega^3)$. Furthermore, since $\mathrm{Rad}_{sy}(\Omega^3)$ is the supremum of $\mathbf{r}$ among all $\Gamma$, it follows that $\mathrm{Rad}_{sy}(\Omega^3)\leq\mathrm{Rad}_t(\Omega^3)$.
\end{proof}

\subsection{Examples}
As explained in the introduction, for the class of maximal initial data sets Theorem \ref{bhexistence} is vacuous. However, here we show by explicit construction that it is straightforward to find examples that satisfy the hypotheses of this result, and in fact that they are ubiquitous. Let $(M^n,g)$ be an arbitrary complete asymptotically flat Riemannian manifold, and consider an embedded cube $[-1,1]^n\hookrightarrow M^n$. Now define a symmetric 2-tensor $k=Fg$, where $F$ is a smooth compactly supported function on $M^n$, with $F\equiv C>>1$ inside the cube.
Then $(M^n,g,k)$ is an asymptotically flat initial data set whose energy and momentum densities are given by $\mu=\frac12(R+(n^2-n)C^2)$ and $J=0$, inside the cube. Therefore if $3\leq n\leq 7$, then by choosing the constant $C$ to be sufficiently large we find that the assumptions of Theorem \ref{bhexistence} and Corollary \ref{cor main} are satisfied, which yields a closed properly embedded smooth apparent horizon within $(M^n,g,k)$. The above construction can also be adapted for the torical-radius version of the theorem.

\appendix

\section{Existence and Regularity of Warped $\mu$-Bubbles}
\label{appA}

In this section we discuss the existence and regularity of warped $\mu$-bubbles with Lipschitz potential function $f$, which does not appear to be in the literature. Previous results on this topic have assumed a smooth potential function, however the most natural choices for $f$ in applications are often merely Lipschitz since they involve distance functions. The notation here will be consistent with that of Section \ref{sec4}, with $\check{M}^n$ replaced with $M^n$.

\begin{proposition}
     Let $(M^n,\partial_\pm M^n,g)$ be an $n$-dimensional Riemannian band with $n\leq 7$. Suppose that $u\in C^\infty(M^n)$ is strictly positive, and $f\in\mathrm{Lip}_{loc}(M^n)$ satisfies $f\to\pm\infty$ on $\partial_\pm M^n$. 
     Then for any $\varsigma\in (0,1)$ there exists a $C^{2,\varsigma}$ warped $\mu$-bubble $\Sigma^{n-1}=\partial\Omega \setminus\partial_{+}M^n$, where $\Omega$ minimizes the functional $\mathcal{A}_{u,f}$ of \eqref{mu bubble functional} among Caccioppoli sets whose symmetric difference with $\Omega_0$ is compactly contained within the interior of $M^n$.
\end{proposition}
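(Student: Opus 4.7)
The plan is to execute the standard direct method of the calculus of variations in the weighted setting, using the blow-up of $f$ at $\partial M^n$ to provide \emph{a priori} confinement of minimizers, and then to carry out a careful regularity bootstrap since $f$ is only Lipschitz rather than smooth. Throughout I would work with competitors in the admissible class $\mathcal{C} = \{\Omega : \Omega \Delta \Omega_0 \Subset \mathring{M}^n\}$, noting that $\mathcal{A}_{u,f}(\Omega_0) = \int_{\partial \Omega_0} u \, d\mathcal{H}^{n-1} < \infty$ gives an a priori upper bound on the infimum.

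First I would establish confinement. For each $M > 0$ set $K_M = \{x \in M^n : |f(x)| \leq M\}$; since $f \to \pm \infty$ on $\partial_\pm M^n$, for $M$ sufficiently large $\{f > M\}$ is a neighborhood of $\partial_+ M^n$ contained in $\Omega_0$ and $\{f < -M\}$ is a neighborhood of $\partial_- M^n$ disjoint from $\Omega_0$. Given any competitor $\Omega$, define the truncation $\tilde\Omega = (\Omega \cap K_M) \cup \{f > M\}$, which remains in $\mathcal{C}$. The weighted perimeter change $\int u \, d\mathcal{H}^{n-1}(\partial^* \tilde\Omega) - \int u \, d\mathcal{H}^{n-1}(\partial^* \Omega)$ is bounded, via the coarea formula applied to the Lipschitz function $f$ at almost every level $M$, by a finite multiple of $\|u\|_{L^\infty}\,\mathcal{H}^{n-1}(\{f = M\})$, whereas the bulk-term gain is at least $(\inf_{K_M} u)\, M\, |\Omega \Delta \tilde\Omega|$. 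Choosing $M_0$ large enough that the bulk gain dominates the perimeter overhead at some regular level $M$, we see that any minimizing sequence may be replaced by one with $\Omega_j \Delta \Omega_0 \subset K_{M_0}$.

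On this confined subclass, the weighted perimeter $\int u\, d\mathcal{H}^{n-1}$ is bounded since $u$ is smooth and $f$ is bounded on $K_{M_0}$, so the BV compactness theorem produces a subsequential $L^1$ limit $\Omega$. Continuity of $u > 0$ yields lower semicontinuity of the weighted perimeter, while the bulk term passes to the limit directly, so $\mathcal{A}_{u,f}(\Omega) \leq \liminf_j \mathcal{A}_{u,f}(\Omega_j) = \inf_{\mathcal{C}} \mathcal{A}_{u,f}$. Hence a minimizer exists.

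For regularity, inside $K_{M_0}$ the minimizer $\Omega$ is a $(\Lambda, r_0)$-minimizer of the unweighted perimeter in Tamanini's sense, with $\Lambda$ depending on $\|u\|_{C^1(K_{M_0})}$, $\|1/u\|_{L^\infty(K_{M_0})}$, and $M_0$. Since $n \leq 7$, De Giorgi--Tamanini theory gives that $\partial^*\Omega \cap \mathring{M}^n$ is a $C^{1,\alpha}$ hypersurface with empty singular set, so $\Sigma^{n-1}$ is $C^{1,\alpha}$ and the Euler--Lagrange condition $H = f - \langle \nabla \log u, \nu\rangle$ holds weakly. Expressing $\Sigma^{n-1}$ locally as a graph $w$ of a $C^{1,\alpha}$ function over a tangent plane, $w$ satisfies the quasilinear elliptic equation
\begin{equation*}
\operatorname{div}\left(\frac{\nabla w}{\sqrt{1+|\nabla w|^2}}\right) = f(x,w) - \langle \nabla \log u(x,w), \nu(x,w)\rangle,
\end{equation*}
whose right-hand side is Lipschitz in $(x,w)$ because $f \in \mathrm{Lip}_{loc}$ and $u \in C^\infty$ with $u > 0$. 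Schauder theory for quasilinear elliptic equations with Lipschitz right-hand side then yields $w \in C^{2,\varsigma}$ for every $\varsigma \in (0,1)$, giving the claimed regularity of $\Sigma^{n-1}$.

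The main obstacle I anticipate is the confinement step: with only a Lipschitz $f$, the level sets $\{f = M\}$ are at best rectifiable and not smooth, so the truncation comparison must be carried out at a.e.\ level $M$ using the coarea formula rather than by a direct smooth deformation argument. A secondary subtlety is that the regularity bootstrap stops at $C^{2,\varsigma}$ for $\varsigma < 1$ precisely because $f$ is merely Lipschitz; if $f$ were $C^{k,\alpha}$ the standard iteration would yield $C^{k+2,\alpha}$ regularity for $\Sigma^{n-1}$.
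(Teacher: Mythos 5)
Your proposal follows essentially the same route as the paper's appendix: existence via the direct method with a confinement argument exploiting the blow-up of $f$, $C^{1,\varsigma}$ regularity from the theory of almost-minimizing boundaries (the paper cites the $C$-almost minimizing notion and Eichmair's Theorem A.1, you invoke the equivalent $(\Lambda,r_0)$-minimizer notion and De Giorgi--Tamanini), and a Schauder bootstrap on the weak prescribed-mean-curvature equation $H=f-\langle\nabla\log u,\nu\rangle$. The main difference is that the paper delegates the confinement and existence step to \cite[Section 3]{ChodoshLi}, while you sketch it from scratch.

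Two small imprecisions in your sketch are worth flagging. First, in the confinement step the two quantities you compare are not directly comparable at a single fixed level: the bulk gain scales with $|\Omega\Delta\tilde\Omega|$, which can be made arbitrarily small by a competitor that barely leaks past $K_M$, while $\mathcal{H}^{n-1}(\{f=M\})$ is a fixed geometric quantity. The correct execution of the argument (as in \cite{ChodoshLi}) uses the coarea formula to \emph{average} the perimeter cost over a range of levels and thereby relate it to $\mathrm{Lip}(f)\,|\Omega\cap\{|f|>M\}|$, after which the bulk gain dominates once $M$ is large compared to $\mathrm{Lip}(f)\,\|u\|_\infty/\inf u$ on the relevant compact set; you hint at this by mentioning coarea but do not close the comparison. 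Second, in the Schauder step, the right-hand side is not immediately Lipschitz: it involves $\nu$, which is initially only $C^{0,\alpha}$, so the first iteration yields $w\in C^{2,\alpha}$; only after that is $\nu$ Lipschitz and the right-hand side of class $C^{0,\varsigma}$ for all $\varsigma<1$, giving $w\in C^{2,\varsigma}$. Neither issue changes the outcome, but both require a second look before the argument is watertight.
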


\begin{proof}
The existence theory for $\mu$-bubbles relies on the compactness theorem for Caccioppoli sets, and extends without any adjustment to the non-smooth setting. More precisely, it follows from \cite[Section 3]{ChodoshLi} that a minimizing Caccioppoli set $\Omega$ exists, whose reduced boundary $\partial^\ast\Omega\setminus\partial_+ M^n =\Sigma^{n-1}$ does not intersect $\partial M^n$.
Moreover, it is straightforward to show that $\Sigma^{n-1}$ satisfies the \emph{$C$-almost minimizing} property, and therefore according to \cite[Theorem A.1]{Eichmair1} this surface is $C^{1,\varsigma}$ smooth. Alternatively, 
as in \cite[Theorem 2.2]{ZhouZhu}, we may follow the arguments contained in \cite[Section 3]{Morgan} to obtain the same conclusion. Writing $\Sigma^{n-1}$ locally as graph, we find that the graph function weakly satisfies the second order elliptic equation
\begin{equation}
H=f-\langle\nabla\log u, \nu\rangle.
\end{equation}
Since the potential function $f$ is Lipschitz, the normal $\nu$ is $C^{0,\varsigma}$, and the weight function $u>0$ is smooth, standard Schauder theory yields $C^{2,\varsigma}$ regularity for the $\mu$-bubble.
\end{proof}





\end{document}